\documentclass[envcountsame, runningheads]{llncs}
\usepackage{amsmath, amssymb}
\usepackage{tikz}
\usepackage{subcaption}
\spnewtheorem{observation}[theorem]{Observation}{\bfseries}{\itshape}
\usetikzlibrary{positioning, shapes, arrows.meta, calc, intersections}
\usetikzlibrary{decorations.pathmorphing, positioning, calc}
\usepackage[justification=centering]{caption}
\usepackage{comment}
\usepackage{hyperref}
\usepackage{graphicx}      
\usepackage{caption}       
\usepackage{subcaption}    
\usepackage{float}     
\usepackage{xcolor}
\usepackage{wrapfig,lipsum}
\usepackage{lineno}

\title{Conflict-Free Cuts in Planar and 3-Degenerate Graphs with 1-Regular Conflicts\thanks{A preliminary version of this paper appeared in IWOCA 2026 \cite{iwoca-cfcuts}.}}
\titlerunning{Conflict-Free Cuts in Planar and 3-Degenerate Graphs}
\author{Subrahmanyam Kalyanasundaram\thanks{The author acknowledges support from ANRF through project number ANRF/ARGM/2025/002583/MTR.}
\and 
Subodh Kumar
\institute{Department of Computer Science and Engineering, IIT Hyderabad \\
\email{\{subruk, cs23resch11009\}@iith.ac.in}
}
}
\date{}

\begin{document}

\maketitle

\begin{abstract}
A conflict-free cut $F$ on a simple connected graph $G = (V, E)$ is defined as a set of edges $F \subseteq E$ such that $G-F$ is disconnected, and no two edges in $F$ are conflicting. The notion of conflicting edges is represented using an associated conflict graph $\widehat{G} = (\widehat{V}, \widehat{E})$ where $\widehat{V} = E$. Deciding if a given planar graph $G$, with an associated conflict graph $\widehat{G}$, has a conflict-free cut is known to be NP-complete, when $G$ has maximum degree four and $\widehat{G}$ is a line graph of $G$ [Bonsma, JGT 2009].  

In this paper, we prove the following for the case when $\widehat{G}$ is 1-regular.
\begin{itemize}
    \item We completely resolve the complexity of the decision problem when $G$ is planar. Towards this end, we show that (a) there always exists a conflict-free cut when the graph is planar and 4-regular unless it is the octahedron graph and (b) the decision problem is NP-complete, even in the case when $G$ is planar with maximum degree 5.
    \item We also show that the decision problem is NP-complete
    when $G$ is a 3-degenerate graph with maximum degree 5. This completely resolves the complexity status of the problem when $G$ is 3-degenerate.

    \item We construct families of graphs with 1-regular conflict graphs that do not have a conflict-free cut.
\end{itemize}
Our results answer the questions posed in [Rauch, Rautenbach and Souza, IPL 2025].

\end{abstract}

\textbf{Keywords:} Conflict-free cut, NP-completeness, planar graphs, 3-degenerate graphs. 

\section{Introduction}

We study the conflict-free cut problem for planar and 3-degenerate graphs, when the conflict graph $\widehat{G}$ is 1-regular. The decision version of the problem is formally defined below:
\begin{definition}[CF-cut problem]
    Given a simple connected graph $G=(V,E)$ and a conflict graph $\widehat{G}=(\widehat{V}, \widehat{E})$ with $\widehat{V} = E$, the \emph{Conflict-Free Cut (CF-cut)} problem is to decide if there is a set $F \subseteq E$ such that $G-F$ is disconnected and $F$ is independent in $\widehat{G}$.
\end{definition}
The notion of conflicts on edges is represented using the  conflict graph $\widehat{G} = (\widehat{V}, \widehat{E})$ where $\widehat{V} = E$. Edges  $e_i$ and $e_j$ conflict in $G$ if and only if there is an edge $\{e_i, e_j\} \in \widehat{E}$.

Several optimization problems have been studied under conflict constraints. The conflict constraints restrict the solution space by assigning conflicts which forbid items to be chosen simultaneously. Some examples include spanning tree problem \cite{barros2025conflict,darmann2011}, 
 bin packing \cite{conflict-binpacking},
knapsack \cite{knapsack-conflict1,knapsack-conflict2},
matching \cite{glock2024conflict,agrawal2020parameterized,darmann2011}, 
and feedback vertex set \cite{conflict-FVS}. Along the same lines, the CF-cut problem was introduced by  Rauch, Rautenbach, and Souza in \cite{rauch2025conflict}.

We obtain the matching cut problem as a special case of the CF-cut problem when the conflict graph $\widehat{G}$ is the line graph of $G$. The \emph{line graph} of $G$ is given by $L(G)$, where $V(L(G)) = E(G)$ and $\{e_i, e_j\} \in E(L(G))$ if and only if $e_i$ and $e_j$ share an endpoint in $G$.
First discussed in \cite{graham1971bounds}, the matching cut problem is a well-studied problem in the literature.
Matching cut is known to be NP-complete in general \cite{Chvatal1984}, and even in the case when $G$ is planar \cite{bonsma2009complexity}.
The reader is referred to the following papers  \cite{komusiewicz2020,chen2021,golovach2021,feghali2025journal,Lucke2024journal} for more results on matching cuts.


In this paper, we study the conflict-free cut problem in the case where the conflict graph  $\widehat{G}$ is 1-regular. Apart from being a natural restriction, this case has been studied for other problems with conflict constraints. 
The conflict-free variant of maximum matching is NP-complete under the 1-regular constraint, but the spanning tree problem is polynomial time solvable \cite{darmann2011}.
For the matching cut problem on a graph, an edge $e$ conflicts with as many as $(d_1 + d_2 - 2)$ edges where $d_1$ and $d_2$ are  degrees of the end points of $e$. In other words, the degree of $e$ in the conflict graph $\widehat{G}$ is $(d_1 + d_2 - 2)$. An immediate question is: what is the complexity of conflict-free cut when $\widehat{G}$ is much simpler, say, 1-regular?

In the paper that introduced the CF-cut problem, Rauch, Rautenbach and Souza \cite{rauch2025conflict} showed that the CF-cut problem is NP-complete even when the maximum degree of $G$ is 5 with a 1-regular $\widehat{G}$. They also studied the complexity of the problem in the setting of parameterized complexity, showing that it is fixed parameter tractable with respect to the vertex cover number of $G$, and hard with respect to the size of the feedback vertex set of $G$, and the clique cover number of $\widehat{G}$. 
One natural direction here is the complexity of the CF-cut problem when $G$ is planar, and has maximum degree 5 with 1-regular $\widehat{G}$. The authors of \cite{rauch2025conflict} also asked about the complexity of the problem when $G$ is 3-degenerate with 1-regular $\widehat{G}$. In this paper, we show that both the problems are NP-complete.

When the conflict-graph
$\widehat{G}$ is 1-regular, every edge in $G$ conflicts with exactly one other edge. 
Proposition 4.1 in \cite{rauch2025conflict} notes the following 
when $\widehat{G}$ is 1-regular. 
If the edges incident with a vertex $v \in V(G)$  do not have conflicts among each other, then we have a CF-cut that separates $v$ from the rest of the graph.
Hence, if  $|E(G)|< 2|V(G)|$, then there is a CF-cut in $G$.



Since we show NP-completeness when $G$ is planar and has maximum degree 5, it is natural to study the complexity when 
$G$ is planar and has maximum degree 4. For a graph $G$ with maximum degree 4 that is not 4-regular, it follows that  $|E(G)| < 2 |V(G)|$ and hence there always exists a CF-cut,  when the conflict graph $\widehat{G}$ is 1-regular. 
In this paper, we study the case when $G$ is planar and 4-regular, and show that it always contains a CF-cut unless it is the octahedron graph. 

We present the following results in this paper.

\begin{enumerate}
\item Given a 4-regular planar graph $G$ with  a 1-regular  $\widehat{G}$, there always exists a CF-cut unless it is the octahedron graph. This is shown in Theorem \ref{thm:4-regular-planar}.
\item Given a planar graph $G$ of maximum degree 5 with a 1-regular $\widehat{G}$, it is NP-complete to decide if $G$ has a CF-cut. This is proved in Theorem \ref{thm:planarNPC}.

This result, along with the result in point 1, completely resolves the complexity of the problem when $G$ is planar with 1-regular  $\widehat{G}$.

\item Given a 3-degenerate graph $G = (V, E)$ of maximum degree 5 with a 1-regular $\widehat{G}$, it is NP-complete to decide if $G$ has a CF-cut. This is proved in Theorem \ref{thm:3-degenerate_graph} and also answers a question by \cite{rauch2025conflict} (stated as Problem 4.3).

It should be noted that when $G$ is 3-degenerate with maximum degree 4, it must necessarily contain a CF-cut since $|E(G)| < 2 |V(G)|$. Hence, our result
completely settles the 3-degenerate case.

\item We present a family of uncuttable graphs $G$ with 1-regular $\widehat{G}$ such that $|E(G)| = 2|V(G)|$. This is proved in Lemma \ref{lem:uncuttableH''}. 

We also present another uncuttable family of graphs that are triangulations, one of which is the octahedron graph (depicted in Figure~\ref{fig:octahedron}). This is proved in Theorem \ref{thm:easy_planar}. These families  address  questions posed by \cite{rauch2025conflict} (stated as Problems 4.1 and 4.2).


\end{enumerate}

\section{Preliminaries}
Unless otherwise mentioned, we use $G$ to denote a simple, connected graph.
For a graph $G = (V, E)$ and two disjoint sets of vertices $A, B \subseteq V$, we use the notation $E(A, B)$ to describe the set of edges of $G$ with one vertex in $A$ and another in $B$. We define $M \subseteq E$ as a minimal edge cut of $G$ if $V = A\cup B$ such that $A$ and $B$ induce connected subgraphs, and $M = E(A, B)$. 
We sometimes use the shorthand notation $xy$ to denote an edge $\{x, y\}$.
 For a graph $G = (V, E)$, and a vertex set $S \subseteq V$, we use $G[S]$ to represent the induced graph on $S$. In this paper, we use the term cut, to refer only to an edge cut. We use standard graph-theoretic notation as presented in the book by Diestel \cite{diestel2005graph}.
We refer to a graph without any CF-cut as an \emph{uncuttable graph}, or simply \emph{uncuttable}. Sometimes, for a graph $G$ with 1-regular $\widehat{G}$, we use the term: a graph with 1-regular conflicts. 

We note some definitions that will be needed.
\begin{itemize}
    \item A graph is \emph{$k$-regular} if every vertex in the graph has degree $k$.
    \item A graph is said to be \emph{planar} if it can be drawn in a plane with no two edges crossing each other. For a planar graph, such a drawing is called a \emph{planar embedding}.
    \item An undirected graph $G$ is \emph{$k$-degenerate} if  there is an ordering of the vertices of $G$ such that each vertex $x$ has at most $k$ neighbors  that precede $x$ in the ordering.
    
    From the above definition, it follows that $G$ is $k$-degenerate if and only if we can successively delete a vertex of degree at most $k$ eventually resulting in an empty graph.  
    \item Given a graph $G = (V, E)$, the \emph{square graph of $G$} is defined as $G^2 = (V, \widetilde{E})$ as a graph on the same set of vertices and a superset $\widetilde{E} \supseteq E$ of edges. The set of edges $\widetilde{E}$ is all the pairs of vertices $x, y \in V$ such that either $xy \in E$ or there exists a vertex $w \in V$ such that both $xw, yw \in E$.  
\end{itemize}


\section{4-regular Planar Graphs} \label{sec:4-reg-planar}
In this section, we study the CF-cut problem when $G$ is planar and 4-regular. As noted before, if $|E(G)| < 2|V(G)|$, there always exists a CF-cut when $\widehat{G}$ is 1-regular. This includes the case when $G$ has maximum degree 4, but is not 4-regular. The complexity of the problem for the 4-regular case is not known. In this section, we show that when $G$ is 4-regular and planar, there always exists a CF-cut except in the case when $G$ is the octahedron graph (Figure~\ref{fig:octahedron}).

\begin{wrapfigure}{r}{5.5cm}
\centering
\begin{tikzpicture}[scale=0.9, thick, every node/.style={circle, fill=black, inner sep=1.8pt, font=\small}]

\node (v1) at (0,4) [label=above: $v_1$] {};
\node (v2) at (-1,3) [label=above: $v_2$] {};
\node (v3) at (1,3) [label=above:$v_3$] {};
\node (v4) at (0,2) [label=below:$v_4$] {};
\node (v5) at (-1,1) [label=below left:$v_5$] {};
\node (v6) at (1,1) [label=below right:$v_6$] {};

\draw (v1)--(v2);
\draw (v1)--(v3);
\draw (v2)--(v3);
\draw (v2)--(v4);
\draw (v3)--(v4);
\draw (v4)--(v5);
\draw (v4)--(v6);
\draw (v1) to[bend right=90] node[pos=0.85, coordinate] (m15) {} (v5);
\draw (v1) to[bend left=90]  node[pos=0.85, coordinate] (m16) {} (v6);
\draw (v2)--(v5);
\draw (v3)--(v6);
\draw (v5)--(v6); 

\coordinate (m12) at ($(v1)!0.3!(v2)$);
\coordinate (m13) at ($(v1)!0.3!(v3)$);
\coordinate (m25) at ($(v2)!0.2!(v5)$);
\coordinate (m23) at ($(v2)!0.2!(v3)$);
\coordinate (m24) at ($(v2)!0.7!(v4)$);
\coordinate (m45) at ($(v4)!0.3!(v5)$);
\coordinate (m34) at ($(v3)!0.35!(v4)$);
\coordinate (m36) at ($(v3)!0.25!(v6)$);
\coordinate (m56) at ($(v5)!0.3!(v6)$);
\coordinate (m64) at ($(v6)!0.3!(v4)$);

\draw[dashed, bend right=15] (m12) to (m13);   
\draw[dashed, bend right=20] (m25) to (m23);  
\draw[dashed, bend right=20] (m24) to (m45);   
\draw[dashed, bend right=20] (m34) to (m36);  
\draw[dashed, bend left=20] (m15) to (m56);   
\draw[dashed, bend left=20] (m64) to (m16);  

\end{tikzpicture}
\caption{Uncuttable octahedron with 1-regular conflicts.}
\label{fig:octahedron}
\end{wrapfigure}

\begin{theorem}\label{thm:4-regular-planar} Given a 4-regular planar graph $G$ with 1-regular conflict graph $\widehat{G}$, there always exists a CF-cut unless $G$ is the octahedron graph.
\end{theorem}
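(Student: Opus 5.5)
The plan is to first pin down the structure forced by the absence of a CF-cut, and then use planarity to contradict that structure unless $G$ is the octahedron. Suppose $G$ is 4-regular and planar with 1-regular $\widehat{G}$, and has no CF-cut. By the observation from \cite{rauch2025conflict}, every vertex $v$ has at least one pair of conflicting edges among its four incident edges; call such a pair \emph{local at $v$}. Since $G$ is simple, two edges share at most one endpoint, so a conflicting pair is local at at most one vertex. There are $|E|/2=|V|$ conflicting pairs and at least $|V|$ local ones. Hence every conflicting pair is local at exactly one vertex, and every vertex carries exactly one local pair.

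I encode this as an orientation $D$ of $G$. Each edge $e$ lies in exactly one local pair, at an endpoint $v$, and I orient $e$ away from $v$. Then every vertex has out-degree exactly $2$ in $D$; its two out-arcs are its local pair, with out-neighbours $x_v,y_v$. For $\emptyset\neq S\subsetneq V$, the cut $E(S,V\setminus S)$ contains a conflicting pair if and only if some vertex $v$ has both $x_v$ and $y_v$ on the opposite side from $v$. So $G$ has a CF-cut if and only if some proper nonempty $S$ satisfies the following: no vertex of $S$ has both out-neighbours outside $S$, and no vertex outside $S$ has both out-neighbours inside $S$. Two consequences of the local structure follow immediately, and I will use them as forbidden configurations. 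Single vertices fail trivially. Every edge $S=\{u,v\}$ fails, since the endpoint of $uv$ at which $uv$ is not paired sends both its paired edges out of $S$.

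Next I use planarity. Euler's formula gives $|F|=|V|+2$, and $\sum_f (4-\ell(f))=8$. Therefore there are at least $8$ triangular faces, and they must be numerous relative to the faces of length at least $5$. I will test the candidate sets $S$ given by a triangle $T$, by $T$ together with a neighbour, and by small unions of adjacent triangles, together with their complements. For $S$ to fail, the $D$-structure around $T$ must contain a vertex with both out-arcs crossing. This should force the arcs on the triangle into a specific rotational pattern, which I will determine by a local case analysis of how the local pair at each vertex of $T$ sits relative to the cyclic order of its edges in the embedding. I would also try a closure argument: start from a set $S$ and repeatedly move any violating vertex to the other side. If I can choose a seed such that this process provably stabilises at a proper set, then a CF-cut exists. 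A natural seed is a sink strongly connected component of $D$, whose vertices are automatically safe.

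The main obstacle is the final step: showing that failure of every such local set propagates around the whole embedding and forces $G$ to be the octahedron. My first attempt would be to show that if every triangle fails, then every vertex lies on four triangular faces. Then $\sum_f(4-\ell(f))=8$ with all faces triangles gives $|F|=8$, $|V|=6$, and $G$ is the unique 4-regular planar triangulation on $6$ vertices, namely the octahedron. If some face of length at least $4$ exists, I would take the triangle adjacent to it and exhibit a good set among the candidates above. Finally, the octahedron must be checked directly, since it is uncuttable for the conflicts of Figure~\ref{fig:octahedron}.
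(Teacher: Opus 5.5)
Your reduction to the orientation $D$ is sound and matches the paper's starting point. The count showing that each conflicting pair is local at exactly one vertex, and that each vertex carries exactly one local pair, is Observations~\ref{obs:one_pair_conflict} and~\ref{obs:necessary-4regular}. Your criterion for $E(S,V\setminus S)$ to be conflict-free is also correct. The gap is that everything after this, which you flag as ``the main obstacle'', is the actual content of the theorem, and the tools you propose for it do not work as stated.

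\begin{itemize}
\item The sink-component seed is vacuous. Every vertex has out-degree $2$ and degree $4$, hence in-degree $2$. So $D$ is a connected balanced digraph, hence strongly connected, and its only sink component is $V$. The closure procedure likewise comes with no reason to stop at a proper set.
\item The step ``take the triangle adjacent to a face of length at least $4$'' fails, because such a face need not be near any triangle. Take the cube surface subdivided into $k\times k$ grids; its medial graph is a simple $4$-regular planar graph whose only triangular faces are the $8$ coming from the cube corners. For large $k$, most of its quadrilateral faces touch none of them.
\item The $8$ triangular faces do not directly give a candidate. $S=V(T)$ passes the internal test only if $T$ is a directed $3$-cycle of $D$, and nothing in your argument produces such a triangle. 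The claim that failure of all triangles puts every vertex on four triangular faces is unsupported.
\end{itemize}

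The missing idea is to choose the seed by conflicts rather than by face length. Call a face a CF face if no two of its boundary edges conflict; in your language, its boundary is a directed cycle of $D$. Counting $|V|+2$ faces against $|V|$ conflicting pairs gives at least two CF faces (Lemma~\ref{lem:CF-regions-in-4regular-planar}). For a CF face $F$ and $S=V(F)$, every vertex of $S$ has an out-neighbour on the boundary of $F$. So $S$ can fail only if $S=V$, or if some vertex $x\notin S$ has both out-arcs landing on $F$. The paper then proceeds as follows:
\begin{itemize}
\item A CF triangle forces the octahedron (Lemma~\ref{lem:CF-trianle-in-4regular-planar}).
\item For a CF face of length at least $4$, it rules out chords (Lemma~\ref{lem:no_edges_on_CF_face}), which disposes of $S=V$.
\item It handles a vertex $x$ meeting $F$ at nonadjacent or at adjacent vertices (Lemmas~\ref{lem:protect_from_far} and~\ref{lem:CFFPT_case}).
\end{itemize}
Each case ends with an explicit CF-cut. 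Your proposal needs this step, or some substitute that yields a set satisfying the internal condition automatically and then controls the external obstructions. Without it, you prove nothing beyond the preliminary observations.
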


We give an overview of the proof. We first show that the planar embedding must contain faces that do not have conflicts between adjacent edges. If such a face is a triangle, we reason that there is always a CF-cut unless the graph is an octahedron. Otherwise, we can separate the face from the rest of the graph unless there are two conflicting edges that connect the face to the rest of the graph. We show that the graph contains a CF-cut in this case as well. The proof is through detailed case analysis. 

Throughout this section, we assume that the planar graph $G =(V,E)$ is given with an associated planar embedding. We may skip the explicit mention of the embedding. When a CF-cut contains only one vertex on one side  of the cut, and all the remaining vertices  on the other side, we refer to such a cut as a \emph{trivial CF-cut}. 



\begin{observation}\label{obs:one_pair_conflict}
In a 4-regular planar graph $G$ with 1-regular $\widehat G$ without a trivial CF-cut, each vertex is incident with exactly two mutually conflicting edges.
\end{observation}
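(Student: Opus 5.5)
Fix a vertex $v$ and look at its four incident edges. Because $G$ has no trivial CF-cut, the star $E(\{v\}, V\setminus\{v\})$ is not independent in $\widehat G$, so at least one pair of edges at $v$ conflict. It remains to show that there cannot be two such pairs, and for that I will use a counting argument.

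Let $p_v$ denote the number of conflicting pairs both of whose edges are incident with $v$. Since $\widehat G$ is 1-regular, an edge at $v$ belongs to at most one conflicting pair, so $p_v \le 2$. By the previous paragraph, $p_v \ge 1$ for every $v$.

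Now count conflicting pairs that share an endpoint. Two distinct edges of a simple graph share at most one endpoint, so each such pair is counted by exactly one vertex. Hence
\[
\sum_{v \in V} p_v \;\le\; |\widehat E| \;=\; |E|/2 \;=\; |V|,
\]
where $|E| = 2|V|$ because $G$ is 4-regular. Combining this with $p_v \ge 1$ for all $v$ forces $p_v = 1$ for every vertex. So each vertex is incident with exactly one pair of mutually conflicting edges, which is the claim.

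The counting also shows that every conflicting pair is incident with a common vertex, which will be useful later. There is no real obstacle here; the only point needing care is that a pair of edges cannot be counted at two vertices, and this holds because $G$ is simple. Planarity is not used in this argument.
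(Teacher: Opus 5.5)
Your proof is correct and is the same argument as the paper's: the absence of a trivial CF-cut gives at least one conflicting pair at each vertex, and $|\widehat E| = |E|/2 = |V|$ leaves room for only one such pair per vertex. The difference is that you spell out the double-counting step, namely that each conflicting pair sharing an endpoint is charged to exactly one vertex because $G$ is simple, which the paper leaves implicit.
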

\begin{proof}
    If the incident edges of any vertex do not have any conflicts with each other, we have a trivial CF-cut. Since we have $|E(G)| = 2|V(G)|$, each vertex must have exactly one conflict among its incident edges.
    \qed
\end{proof}

\begin{observation}
\label{obs:necessary-4regular}
For a 4-regular planar graph $G$ with 1-regular $\widehat{G}$ to be uncuttable, a necessary condition is that all the conflicts must be between adjacent edges (edges incident with the same vertex).
\end{observation}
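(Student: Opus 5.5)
This is a counting argument built on Observation~\ref{obs:one_pair_conflict}. Suppose $G$ is a 4-regular planar graph with 1-regular $\widehat{G}$ that is uncuttable. Then in particular $G$ has no trivial CF-cut, and Observation~\ref{obs:one_pair_conflict} applies: each vertex $v$ is incident with exactly one pair of mutually conflicting edges. We want to show that every edge of $\widehat{G}$ joins two edges of $G$ sharing an endpoint.

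The plan is to count how many conflict pairs are ``adjacent'' pairs, meaning both edges are incident with a common vertex. Since $G$ is simple, two distinct edges share at most one endpoint. Hence every adjacent conflicting pair $\{e,f\}$ is witnessed by exactly one vertex, namely the common endpoint. This gives a well-defined map
\[
\phi\colon V(G) \to \{\text{adjacent conflict pairs}\},
\]
sending $v$ to the unique conflicting pair among its incident edges. By Observation~\ref{obs:one_pair_conflict} the map is defined at every vertex. Because a conflict pair determines its common endpoint, $\phi$ is injective, and by the definition of an adjacent pair it is surjective. So the number of adjacent conflict pairs equals $|V(G)|$.

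On the other hand, $\widehat{G}$ is 1-regular, so it is a perfect matching on $E(G)$. It therefore has $|E(G)|/2 = 4|V(G)|/4 = |V(G)|$ edges, using 4-regularity to get $|E(G)| = 2|V(G)|$. The $|V(G)|$ adjacent conflict pairs thus exhaust all conflict pairs, and every conflict is between adjacent edges.

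There is no real obstacle. The only point needing care is the injectivity of $\phi$, which uses simplicity of $G$: with parallel edges, a pair could share two endpoints and be counted twice. Planarity is not needed for this necessary condition; it is stated only because of the setting.
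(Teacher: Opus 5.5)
Your proof is correct and is essentially the paper's argument. Both count the $|E(G)|/2=|V(G)|$ conflict pairs against the requirement that every vertex carry an adjacent conflict pair, each such pair serving only its unique common endpoint. The paper phrases this contrapositively: a non-adjacent conflict leaves some vertex with no conflict among its incident edges, which gives a trivial cut. You instead go through Observation~\ref{obs:one_pair_conflict} and make the injectivity, which relies on simplicity, explicit.
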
 

\begin{proof} 
There are exactly $|E(G)| /2 = |V(G)|$ conflicts. If any conflict is between two non-adjacent edges, then at least one vertex has no conflict among the incident edges, resulting in a trivial cut.
\qed
\end{proof}

For an uncuttable 4-regular planar graph $G = (V, E)$ with an associated planar embedding and a 1-regular $\widehat{G}$, we define two types of conflicts: (1) Type~1 ($T_1$) conflict when the pair of conflicting edges belongs to the same face,
and (2) Type~2 ($T_2$) conflict when the pair of conflicting edges does not belong to the same face.

\begin{observation}\label{obs:region-types-in-4reg-planar} For any 4-regular planar graph $G$ with an associated planar embedding and a 1-regular $\widehat{G}$, there can be only two types of faces: one that has conflicting boundary edges and the other that does not.
\end{observation}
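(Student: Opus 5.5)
The statement is essentially a dichotomy, so I would prove it by a direct partition of the faces of the fixed planar embedding. For a face $f$, consider its set of boundary edges. Either some pair of boundary edges of $f$ conflict with each other in $\widehat{G}$, or no such pair exists. These two alternatives are mutually exclusive and exhaustive, so every face falls into exactly one of the two classes. This is already the content of the observation. The only care needed is to state precisely what ``conflicting boundary edges'' means, so that the classification is well defined.

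I would take the relevant notion to be that two edges on the boundary of $f$ are conflicting, i.e.\ adjacent in $\widehat{G}$. Since $\widehat{G}$ is 1-regular, each boundary edge has a unique partner. So for each face we simply check whether the partner of some boundary edge also lies on the boundary of $f$.

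In the intended setting of an uncuttable graph, Observations~\ref{obs:one_pair_conflict} and~\ref{obs:necessary-4regular} say that every conflict is between two edges sharing a vertex. Such a pair lies on a common face exactly when it is a $T_1$ conflict. With this, I would refine the first class: a face whose boundary contains a conflicting pair contains a $T_1$ conflict pair. Moreover, each vertex has exactly one conflicting pair among its four incident edges. Hence a face either contains at least one pair of consecutive boundary edges at some vertex that conflict, or it contains none.

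I do not expect a real obstacle here, since the claim is a tautological split. The one subtle point is faces whose boundary walk is not a simple cycle, for instance faces bounded by closed walks in graphs that are not 2-connected. For these I would define the boundary as the set of edges on the facial walk, and the dichotomy is unaffected.
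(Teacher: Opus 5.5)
Your proposal is correct and matches the paper, which states this observation without proof because it is simply the exhaustive, mutually exclusive split of faces according to whether two of their boundary edges conflict. Your refinement via Observations~\ref{obs:one_pair_conflict} and~\ref{obs:necessary-4regular} is not needed, and it applies only to uncuttable graphs while the observation covers all 4-regular planar graphs; it does no harm to the argument.
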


The following definition is useful for the remainder of this proof.

\begin{definition}[CF face] 
\label{def:cfface}
Given a planar graph $G$ with planar embedding and 1-regular $\widehat{G}$, a face is said to be a \emph{conflict-free face} or \emph{CF face}, if no two boundary edges of the face are mutually conflicting. We explicitly refer to a CF face on three vertices as a \emph{CF triangle}.
\end{definition}

\begin{observation}\label{obs:conflicts_around_CF_face} For $\ell \ge 3$, let $v_0 v_1 v_2 \ldots v_{\ell-1} v_0$ be a CF face in an uncuttable 4-regular planar graph $G$ with an associated planar embedding and a 1-regular $\widehat{G}$. Then one of the following holds:
    \begin{itemize}
        \item For all $0 \le i \le \ell-1$, the boundary edge $v_i v_{(i+1) \bmod \ell}$ conflicts with an edge incident with $v_i$ (anticlockwise end point); or
        \item For all $0 \le i \le \ell-1$, the boundary edge $v_i v_{(i+1) \bmod \ell}$ conflicts with an edge incident with $v_{i+1}$ (clockwise end point).
    \end{itemize}
\end{observation}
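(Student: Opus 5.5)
We use the two observations already established. Since $G$ is uncuttable, Observation~\ref{obs:necessary-4regular} says every conflict is between two edges sharing an endpoint. Observation~\ref{obs:one_pair_conflict} says each vertex $v$ has exactly one conflicting pair among its four incident edges. Counting confirms this: there are $|V(G)|$ conflicts and $|V(G)|$ vertices, and each conflict lies at a single common vertex. Thus to every vertex $v$ we can attach a unique pair $\{e,e'\}$ of edges incident with $v$ that conflict at $v$. Every edge $e=xy$ lies in exactly one such pair, assigned to either $x$ or $y$; call that endpoint the \emph{conflict end} of $e$.

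Now fix the CF face $v_0v_1\cdots v_{\ell-1}v_0$ and write $e_i = v_iv_{i+1}$, indices mod $\ell$. Each boundary edge $e_i$ conflicts with exactly one edge, and that edge shares the conflict end of $e_i$. Because the face is CF, the partner of $e_i$ is not a boundary edge. Look at a vertex $v_i$, which is incident with the two consecutive boundary edges $e_{i-1}$ and $e_i$. The pair attached to $v_i$ cannot be $\{e_{i-1},e_i\}$, since the face is CF. Hence at most one of $e_{i-1}, e_i$ has conflict end $v_i$. Indeed, if both did, each would be paired at $v_i$, and $v_i$ carries only one pair; a boundary edge and its partner form the unique pair at its conflict end.

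Next, define $\sigma_i\in\{\text{anticlockwise}, \text{clockwise}\}$ according to whether the conflict end of $e_i$ is $v_i$ or $v_{i+1}$. Suppose the $\sigma_i$ are not all equal. Then, going around the cycle, some index $i$ has $\sigma_{i-1}=\text{clockwise}$ and $\sigma_i=\text{anticlockwise}$. In that case both $e_{i-1}$ and $e_i$ have conflict end $v_i$, which the previous paragraph rules out. So the $\sigma_i$ are constant, and the two cases are exactly the two bullets of the statement.

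The only step needing care is the claim that at most one boundary edge at $v_i$ has conflict end $v_i$. It relies on the uniqueness of the conflicting pair at each vertex, which comes from Observation~\ref{obs:one_pair_conflict}. It also needs the CF property to exclude the pair $\{e_{i-1},e_i\}$ itself. One degenerate point should be checked: a vertex might appear twice on the facial walk. This cannot happen if the face boundary is a cycle. Otherwise the same local argument applies at each occurrence, because only consecutive boundary edges are compared.
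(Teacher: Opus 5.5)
Your proof is correct and uses the same ingredients as the paper: each vertex carries exactly one conflicting pair (Observation~\ref{obs:one_pair_conflict}), and the CF property forces each boundary edge's partner off the face. The paper's two-sentence proof leaves implicit why the orientation cannot switch along the face; your argument supplies that step, since a switch at $v_i$ would force the unique pair at $v_i$ to be $\{e_{i-1},e_i\}$, contradicting the CF property.
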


\begin{proof}
    Since $G$ is planar and 4-regular, as per Observation \ref{obs:one_pair_conflict}, it follows that every vertex has exactly one pair of incident edges conflicting. Since the face is a CF face, each boundary edge conflicts with one of its adjacent edges that are not part of the face boundary.
\qed
\end{proof}

\begin{lemma} 
\label{lem:CF-regions-in-4regular-planar}
There always exist at least two CF faces in a 4-regular planar graph $G$ with 1-regular $\widehat{G}$.
\end{lemma}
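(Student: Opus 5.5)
The plan is a counting argument based on Euler's formula. Since $G$ is 4-regular, $|E(G)| = 2|V(G)|$. Euler's formula $|V|-|E|+|F| = 2$ for the connected plane graph $G$ then gives exactly $|F| = |V(G)|+2$ faces. Because $\widehat{G}$ is 1-regular, the edges split into exactly $|E(G)|/2 = |V(G)|$ conflicting pairs. A face fails to be a CF face (Definition~\ref{def:cfface}) exactly when some conflicting pair $\{e,f\}$ has both $e$ and $f$ on its boundary; we then say the face is \emph{spoiled} by that pair. So it suffices to show that the conflicting pairs together spoil at most $|V(G)|$ faces. Then at least $(|V(G)|+2)-|V(G)| = 2$ faces are CF faces.

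The natural way to get this bound is to show that each conflicting pair spoils at most one face. Each edge lies on the boundary of at most two faces, so a pair $\{e,f\}$ spoils at most two faces. It spoils two only if both $e$ and $f$ lie on the boundaries of the same two faces $\phi_1 \neq \phi_2$. In the dual multigraph $G^*$, the edges $e^*$ and $f^*$ would then both join $\phi_1$ and $\phi_2$, forming a 2-cycle. By planar cycle--cut duality, this means $\{e,f\}$ is a minimal edge cut of $G$, and in particular it is a cut consisting of exactly the two edges $e$ and $f$.

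The main obstacle is to exclude these doubly-spoiling pairs, or to show that they do not break the count. I would first check whether the lemma is meant in the setting of the surrounding section, i.e.\ for $G$ without a trivial CF-cut. There Observation~\ref{obs:one_pair_conflict} and Observation~\ref{obs:necessary-4regular} say that every conflict is between two edges at a common vertex, with exactly one conflicting pair per vertex. That local structure should simplify the analysis of a 2-edge cut $\{e,f\}$ with $e$ and $f$ sharing an endpoint $u$. Note that 4-regular graphs are Eulerian, so every edge cut is even and there are no bridges, and a 2-edge cut is therefore the smallest cut possible. If $G$ has a trivial CF-cut, or has a conflict between non-adjacent edges, I would argue separately: some vertex has no conflicting pair among its incident edges, and the sets of edges incident with the four faces around that vertex can be used directly to exhibit CF faces.

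For the residual case of a doubly-spoiling pair $\{e,f\}$ forming a 2-edge cut, I would try a refined count. Such a cut splits $G$ into two sides $A$ and $B$, each of which becomes 4-regular after adding back one edge across the cut. I would then apply the counting argument, or induction on $|V(G)|$, to each side separately. The aim is to find at least one CF face strictly inside each side, which is not incident with $e$ or $f$, giving at least two CF faces in total. Making this induction precise is the step I expect to require the most care, because the modified sides may fail to be simple or may inherit a different conflict structure near the cut.
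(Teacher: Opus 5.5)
Your first paragraph is exactly the paper's proof, which simply asserts that each of the $|V(G)|$ conflicting pairs spoils at most one of the $|V(G)|+2$ faces. You correctly spot that this fails precisely when a conflicting pair $\{e,f\}$ is a $2$-edge bond, so that $e$ and $f$ bound the same two faces; the paper does not address this case. However, your proposal does not close it either. The refined count is only announced, not carried out. Restricting to uncuttable $G$ would prove less than the lemma states. The separate argument for cuttable $G$ does not work: a vertex whose four edges are pairwise non-conflicting tells you nothing about whether the faces around it are CF faces, since those faces may contain a conflicting pair elsewhere on their boundaries. In fact no case split on cuttability is needed.

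The missing idea is a parity count inside one side of the bond, which removes the need for induction or for modifying the sides. Let $\{e,f\}$ be a $2$-edge bond with sides $A,B$. Then $G[A]$ is connected with $|E(A)|=2|A|-1$ edges, so by Euler's formula it has $|A|+1$ faces, exactly one of which contains $B$. Hence at least $|A|$ faces of $G$ have all their boundary edges in $E(A)$, and only pairs contained in $E(A)$ can spoil them. Since $|E(A)|$ is odd, there are at most $|A|-1$ such pairs. Moreover, every $2$-edge bond $\{g,h\}\subseteq E(A)$ has a side strictly inside $A$: the component of $G-\{g,h\}$ containing $B$ also contains the ends of $e$ and $f$, so the other side lies in $A$ minus those ends. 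Now choose $A$ inclusion-minimal among all sides of conflicting $2$-edge bonds. Then each pair inside $E(A)$ spoils at most one face, so some face internal to $A$ is a CF face. Next choose $A'$ minimal among such sides contained in $B$ ($B$ itself qualifies). The same argument gives a second CF face, internal to $A'$ and therefore distinct from the first. If there is no conflicting $2$-edge bond, the plain count already suffices.

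For the use in Theorem~\ref{thm:4-regular-planar} alone, there is a shorter fix. In an uncuttable $G$, Observations~\ref{obs:one_pair_conflict} and~\ref{obs:necessary-4regular} force a conflicting $2$-edge bond to have the form $\{ux,uy\}$ for a vertex $u$. Then the other two edges at $u$ form a CF-cut, a contradiction.
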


\begin{proof} We know that there are $2\vert V(G) \rvert$ edges in a 4-regular graph $G$, and by Euler's theorem for a planar graph, there are $\lvert V(G) \rvert + 2$ faces. With 1-regular $\widehat{G}$, there are $\lvert V(G) \rvert$ conflicts in the graph $G$. Even when all conflicts are of $T_1$ type, i.e., two conflicting edges belong to a face, we have at least two faces without their boundary edges conflicting.\qed
\end{proof}

Based on the above lemma, we know that there is a CF face in  a 4-regular planar graph $G$. The following four lemmas are also necessary ingredients of the proof of Theorem \ref{thm:4-regular-planar}. 
Since the proofs of the lemmas are lengthy, 
we first prove Theorem \ref{thm:4-regular-planar}
using the lemmas, and then provide the full proofs of the lemmas.

\begin{lemma}
\label{lem:CF-trianle-in-4regular-planar}
Suppose $G$ is a 4-regular planar graph with 1-regular conflicts. If $G$ is uncuttable and has a CF triangle, then $G$ is the octahedron graph. 
\end{lemma}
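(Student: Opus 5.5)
Let the CF triangle be $v_0v_1v_2$. Since $G$ is uncuttable, Observations~\ref{obs:one_pair_conflict} and~\ref{obs:necessary-4regular} apply: every vertex has exactly one conflicting pair among its four incident edges, and all conflicts are between adjacent edges. By Observation~\ref{obs:conflicts_around_CF_face}, after reflecting the embedding if necessary, each triangle edge $v_iv_{i+1}$ conflicts with an edge at $v_i$ that is not on the triangle. Each $v_i$ has two outer edges, $a_i$ and $b_i$, and exactly one conflicting pair. So at $v_i$ the pair is $\{v_iv_{i+1}, a_i\}$, where $a_i$ is one of the outer edges, and $b_i$ is conflict-free at $v_i$. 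Its conflict partner must then be an edge at its other endpoint.

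The first step is to cut off the triangle. Let $A=\{v_0,v_1,v_2\}$. Then $E(A,V\setminus A)=\{a_0,b_0,a_1,b_1,a_2,b_2\}$. The only conflicts among these six edges occur if some $b_i$ and $b_j$ share an outer endpoint $w$ and conflict there, or if some $a_i$ conflicts with a $b_j$ at a shared outer endpoint. Conflicts between two $a$'s are impossible, because each $a_i$ is already paired with a triangle edge. If none of these occur, $F=E(A,V\setminus A)$ is a CF-cut; if $V\setminus A$ is nonempty and the cut separates, we are done. So uncuttability forces conflicts among the outer edges at outer vertices. Next, consider the sets $A'=A\cup\{w\}$ for outer neighbours $w$, and more generally the growing sets. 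Each time a conflict blocks a cut, absorb the common endpoint into $A$ and re-examine the new boundary, using the fact that each absorbed vertex spends its unique conflicting pair on two edges into $A$. Its other two edges are then conflict-free at it, so they cannot create new conflicts at that endpoint.

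The key counting step is this. When a vertex $w$ is absorbed because two boundary edges conflict at $w$, the boundary changes by removing 2 edges and adding 2 edges, and those two new edges have free ends at $w$. I would track the invariant that every boundary edge is conflict-free at its inside endpoint. Then a boundary conflict can only occur at a common outside endpoint. Also, an outside vertex $u$ hit by three or four boundary edges needs careful handling, since absorbing it shrinks the boundary. The process must terminate: either the boundary becomes a conflict-free cut with both sides nonempty, giving a contradiction, or the outside shrinks to empty or a single vertex. Planarity is used to show that the outer neighbours form a cyclic arrangement around the triangle, so the shared endpoints are consecutive. I expect this to force $b_i$ and $a_{i+1}$ (or $b_i$ and $b_{i+1}$) to meet at a common vertex $w_i$, giving three vertices $w_0,w_1,w_2$.

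The final step, and the main obstacle, is the case analysis that pins down the structure. We need to show that three new vertices $w_0,w_1,w_2$ arise, each adjacent to two triangle vertices, and that they must be pairwise adjacent. This is needed because otherwise cutting $A\cup\{w_0,w_1,w_2\}$ from the rest gives six boundary edges, each conflict-free at its inside end, and the argument repeats. The delicate part is excluding configurations where $w_i=w_j$, and where the outer boundary edges conflict in a chained way that keeps forcing absorption around a larger structure. Here I would argue that the absorbing process strictly increases $|A|$ while keeping the boundary size at most 6. The only way it can fail to produce a cut is if the boundary closes up into a triangle $w_0w_1w_2$ with conflicts at the $w_i$ matching the octahedron pattern of Figure~\ref{fig:octahedron}. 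Then $|V|=6$ and $G$ is the octahedron.
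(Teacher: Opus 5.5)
\textbf{There is a genuine gap in the final step.} Your set-up and absorption process are sound, with one slip: an $a_i$ can never conflict with a $b_j$, because in a 1-regular $\widehat{G}$ its unique partner is already $v_iv_{i+1}$. But the step that actually proves the lemma is only asserted. Nothing you establish shows that the process can avoid producing a cut only by closing up after three new vertices into the octahedron. Absorbing a vertex with exactly two edges into $A$ swaps two boundary edges for two new ones, so the boundary can stay at six edges through arbitrarily many absorptions. Hence ``the argument repeats'' gives neither a contradiction nor a bound on $|V|$. Your planarity remark (outer neighbours arranged cyclically around the triangle) is purely local and says nothing about long chains far from the triangle. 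Your guess that the new vertices are where $b_i$ meets $a_{i+1}$ or $b_{i+1}$ is not forced either.

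\textbf{What is missing.} I think two ingredients would complete your route; the following is my sketch, not something in your proposal.

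First, you need a sharper invariant. Regard the conflicting pair at each vertex as its two in-edges, giving an orientation with every in-degree $2$. Then the triangle is directed, the $b_i$ leave $A$ and the $a_i$ enter it. Each absorbed vertex consumes two outgoing boundary edges with distinct tails and emits at most two. So there are never more than three outgoing boundary edges. This rigidly forces the in-neighbours: $w_1\leftarrow v_1,v_2$, then $w_2\leftarrow v_0,w_1$, then $w_{m+1}\leftarrow w_{m-1},w_m$, with the three $a_i$ emitted by the last two vertices. Thus $G$ is the triangle glued to the square of a path $w_1\cdots w_M$.

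Second, you need a global planarity argument to exclude $M\neq 3$. For $M\ge 4$, each strip triangle is non-separating and hence a face, so the strip is a disc. The way $v_0,v_1,v_2$ attach to $w_1,w_2,w_{M-1},w_M$ along the disc's boundary cycle then forces a crossing or a $K_{3,3}$. For $M=3$ the only planar option is the octahedron, and smaller $M$ give $K_5$ or are impossible.

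\textbf{Comparison with the paper.} With these two steps your route would be a genuine alternative to the paper's proof. The paper instead splits into cases by how many of the three triangle conflicts are of type $T_1$ versus $T_2$, and exhibits a CF-cut in each non-octahedral case. As written, though, your proposal does not prove the lemma.
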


\begin{proof}[Sketch of proof]
Each of the boundary edges of the CF triangle must conflict with another edge.  Using Observation \ref{obs:conflicts_around_CF_face}, we arrive at four possibilities on how many of these conflicts can be $T_1$ and how many can be $T_2$. A detailed case analysis implies that the only case where $G$ is uncuttable is when it is the octahedron graph.
\qed
\end{proof}

\begin{lemma} 
\label{lem:no_edges_on_CF_face} For $\ell \ge 4$, let $F= v_0 v_1 v_2 \ldots v_{\ell-1} v_0$ be a CF face in an uncuttable 4-regular planar graph $G$ with an associated planar embedding and a 1-regular $\widehat{G}$. Then there exist no edges whose one end point is at $v_i$ and the other end point is at $v_j$ for $|j - i| \bmod \ell \ge 2$.
\end{lemma}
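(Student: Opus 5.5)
We argue by contradiction: suppose some edge $v_iv_j$ is a chord of $F$ in the graph sense, with $2 \le (j-i) \bmod \ell \le \ell-2$. Since $F$ is a face, such an edge lies outside $F$ in the embedding. Together with one of the two boundary paths of $F$ between $v_i$ and $v_j$, it forms a closed curve. This curve separates the plane into two regions, one of which contains no interior of $F$. Among all such chords we choose one, say $v_iv_j$, minimising the length of the boundary path $P = v_i v_{i+1}\cdots v_j$ that closes it up with this separation property. The internal vertices $v_{i+1},\dots,v_{j-1}$ of $P$ form a nonempty set, since $j-i\ge 2$.

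The key step is to build a small cut around this region. Let $C$ be the cycle $P + v_iv_j$, and let $A$ consist of the vertices $v_{i+1},\dots,v_{j-1}$ together with all vertices strictly inside $C$ on the side away from $F$. By planarity, every edge leaving $A$ goes to $v_i$ or $v_j$. Each internal vertex $v_k$ of $P$ has degree $4$: two edges lie on $F$, and the other two go into the closed region bounded by $C$.

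We then count the edges of $E(A, V\setminus A)$. These are the edges $v_iv_{i+1}$ and $v_{j-1}v_j$, together with the non-face edges from $v_i$ and $v_j$ into the region. Since $v_i$ already uses $v_{i-1}v_i$, $v_iv_{i+1}$ and $v_iv_j$, it has at most one further edge into the region, and likewise $v_j$. So the cut between $A$ and the rest has at most $4$ edges. Its other side contains $v_i$, $v_j$ and the remaining face vertices, which is nonempty, and the cut genuinely disconnects $G$.

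Now we use the conflict structure. By Observation~\ref{obs:conflicts_around_CF_face}, the boundary edges of $F$ all conflict "in the same direction". Hence $v_iv_{i+1}$ conflicts with an edge at $v_i$ not on $F$, or $v_{j-1}v_j$ conflicts with an edge at $v_j$ not on $F$, depending on the orientation. By Observation~\ref{obs:one_pair_conflict}, each vertex has exactly one conflicting pair.

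We examine the at most four cut edges at $v_i$ and $v_j$. If they contain no conflicting pair, we have a CF-cut, contradicting uncuttability. Otherwise there is a conflicting pair among them; say the pair at $v_i$ is $v_iv_{i+1}$ together with its other region edge $e$. In that case we modify the cut: either we move $v_i$ into $A$, or we use the alternative cut where $v_iv_j$ and $v_{i-1}v_i$ replace those two edges. We check that the new cut is conflict-free using the fact that $v_i$'s only conflicting pair has been absorbed, while $v_{i-1}v_i$ conflicts at $v_{i-1}$ (or at $v_i$, in which case it is already paired). The same is done at $v_j$ if needed.

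The main obstacle is this final case analysis. We must ensure that moving $v_i$ or $v_j$ across keeps the cut conflict-free and the other side nonempty and connected. This is where $\ell\ge 4$ should be used: the face has at least one vertex off $P\cup\{v_i,v_j\}$, or else the chord on the other side can be treated symmetrically. We also need to handle the degenerate possibility that $v_i$ or $v_j$ has no edge into the region, which should only make the cut smaller.
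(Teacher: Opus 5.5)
Your strategy is essentially the paper's. You close the chord $v_iv_j$ with a face path into a cycle, look at where the fourth edges $v_iy$ and $v_jz$ go, and cut off the side of the cycle away from the face. The paper splits into four cases by whether $v_iy$ and $v_jz$ lie inside the cycle. You fold these into a single cut $E(A,V\setminus A)$ plus a one-vertex shift, and use a minimal chord to exclude nested chords. The minimality is genuinely needed: otherwise $v_{j-1}v_j$ could conflict with a nested chord $v_{j-1}v_i$, and both would be in the cut.

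There is a gap, though a small one. You leave the decisive verification as ``the main obstacle,'' and as written two checks are missing. First, you never exclude a conflict between $v_iy$ and $v_jz$ at a common neighbour $y=z\in A$. Second, after moving $v_i$ into $A$ you do not check the partner of the chord $v_iv_j$, which has become a cut edge.

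Both are settled by one observation. By Observations~\ref{obs:one_pair_conflict} and~\ref{obs:necessary-4regular}, every conflicting pair sits at a common endpoint and each vertex carries exactly one pair. So the chord's partner lies in the pair at exactly one of $v_i,v_j$. By symmetry, fix the anticlockwise orientation of Observation~\ref{obs:conflicts_around_CF_face}, so the pair at $v_k$ is $\{v_kv_{k+1},e_k\}$ with $e_k$ not on the face.

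\emph{Case $e_i=v_iv_j$.} Then $e_j=v_jz$, and $E(A,V\setminus A)$ is already conflict-free:
\begin{itemize}
\item $v_iv_{i+1}$ pairs with the chord;
\item $v_{j-1}v_j$ pairs with $e_{j-1}$, which lies inside $A$ by minimality;
\item $v_jz$ pairs with $v_jv_{j+1}$;
\item $v_iy$ pairs at $y$ with an edge that cannot be $v_jz$.
\end{itemize}

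\emph{Case $e_j=v_jv_i$.} Then $e_i=v_iy$, and the only possible conflict in the cut is $\{v_iv_{i+1},v_iy\}$ when $y\in A$. Moving $v_i$ into $A$ yields the cut $\{v_{i-1}v_i,v_iv_j,v_{j-1}v_j\}$, plus possibly $v_jz$. Their partners are $e_{i-1}$ (on the far side), $v_jv_{j+1}$, $e_{j-1}$, and an edge at $z$ that is now internal; none of these is a cut edge.

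Consequently the $y=z$ conflict never arises, and no second shift at $v_j$ is ever needed. Also, $\ell\ge 4$ plays no role beyond allowing a chord to exist: the other side stays nonempty because it contains $v_j$. Connectivity of the two sides is not required by the definition of a CF-cut.
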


\begin{proof}[Sketch of proof]
The edge $v_iv_j$ forms a cycle together with the path  $v_iv_{i+1}\ldots v_{j-1}v_j$. Suppose the other neighbors of $v_i$ and $v_j$ are $y$ and and $z$ respectively. The proof is divided into four cases depending on the     position of the edges $v_i y$ and $v_jz$. A CF-cut is demonstrated in each of these cases.
\qed
\end{proof}

\begin{lemma}
\label{lem:protect_from_far}
Suppose $G$ is a 4-regular planar graph with 1-regular conflicts and $v_0v_1v_2\ldots v_{\ell-1}v_0$, $\ell \ge 4$, be one of its CF faces. Let $v_i$ and $v_j$ be two vertices of this CF face that are not adjacent on this face. If $G$ has two conflicting edges, both incident with a common vertex, say $x$, and meeting the CF face at vertices $v_i$ and $v_j$ respectively, then there always exists a CF-cut. 
\end{lemma}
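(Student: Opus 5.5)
We argue by contradiction: assume $G$ is uncuttable. Then every vertex carries exactly one conflicting pair (Observation~\ref{obs:one_pair_conflict}), all conflicts are between adjacent edges (Observation~\ref{obs:necessary-4regular}), and Lemma~\ref{lem:no_edges_on_CF_face} applies to the CF face $F=v_0v_1\ldots v_{\ell-1}v_0$, so there is no chord $v_iv_j$. The conflicting pair at $x$ is $\{xv_i, xv_j\}$. Hence $x$ is not on $F$, since otherwise $xv_i$ or $xv_j$ would be a chord or a boundary edge of a CF face with a conflict forced at a wrong vertex.

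By Observation~\ref{obs:conflicts_around_CF_face}, orient the conflicts around $F$, say each boundary edge $v_kv_{k+1}$ conflicts with an edge at $v_k$. Each $v_k$ has two non-face edges. One of them is used by the conflict of the boundary edge $v_kv_{k+1}$. At $v_i$ the non-face edges are $v_ix$ and some $v_iy$. Since $v_ix$ conflicts with $xv_j$, which is not at $v_i$, the pair at $v_i$ must be $\{v_iv_{i+1}, v_iy\}$. The same holds at $v_j$ with some $z$. So $v_ix$ and $v_jx$ are both non-conflicting at their face endpoints.

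The cycle $C = x v_i v_{i+1}\ldots v_j x$, together with $F$, splits the plane. Planarity and the absence of chords mean that the vertices strictly inside the region bounded by $C$ and the arc $v_i\ldots v_j$ of $F$ (on the side away from the face) are attached only to $x$ and to the arc vertices $v_{i+1},\ldots,v_{j-1}$. Let $A$ be the set consisting of the arc $v_{i+1},\ldots,v_{j-1}$ together with everything enclosed in this region. Alternatively, take $A = \{x\} \cup (\text{that side})$, choosing whichever side gives a cut avoiding both members of some conflicting pair. The candidate cut is $E(A, V\setminus A)$. It consists of the two face edges $v_iv_{i+1}$ and $v_{j-1}v_j$ plus the edges from $x$ to the other side (the two remaining edges of $x$). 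Alternatively, if $x\in A$, the cut consists of $xv_i$, $xv_j$, the two face edges, and the other edges of $x$.

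The first choice is the simplest, namely $A$ = the inner side excluding $x$. Its cut is $\{v_iv_{i+1}, v_{j-1}v_j, xw_1, xw_2\}$, where $w_1,w_2$ are the inner neighbours of $x$, or $\{v_iv_{i+1}, v_{j-1}v_j, xv_i, xv_j\}$ if $A$ excludes the arc endpoints. The key point is that $xv_i$ and $xv_j$ conflict with each other but with nothing at $v_i$ or $v_j$. A better cut is therefore $\{v_iy\text{-side}\}$: separate $S = \{x\}\cup$ the interior region $\cup \{v_{i+1},\ldots,v_{j-1}\}$ from the rest. Its cut edges are $v_iv_{i+1}$, $v_{j-1}v_j$, $xv_i$, $xv_j$, and the outer edges of $x$. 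We would then adjust $S$ by moving $x$ in or out. Including $v_i$ in $S$ replaces $v_iv_{i+1}$ and $xv_i$ by $v_iv_{i-1}$ and $v_iy$. Since the conflict at $v_i$ is $\{v_iv_{i+1}, v_iy\}$ and the edge $v_{i-1}v_i$ conflicts at $v_{i-1}$, this swap avoids both conflicts at $v_i$. Doing the analogous swap at $v_j$, or not, according to the orientation, should yield a cut whose edges pairwise come from different conflicting pairs.

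The main obstacle is guaranteeing that the remaining edges of $x$ on one side of $C$ do not conflict with the edges chosen. Since $x$'s unique pair is $\{xv_i, xv_j\}$, the other two edges of $x$ conflict at their far endpoints. We then need to check that those partners are not in the cut, which requires a case split on whether both other edges of $x$ lie on the same side of $C$. If they lie on the same side, we put $x$ on the opposite side so that only $xv_i$ and $xv_j$ would cross, and we avoid them by the swap above. If they lie on opposite sides, we use a further case analysis with the orientation from Observation~\ref{obs:conflicts_around_CF_face}.
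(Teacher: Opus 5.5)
\textbf{Gap: the opposite-sides case is never carried out.} Your skeleton is right and matches the paper's case split. You assume uncuttability, fix the orientation from Observation~\ref{obs:conflicts_around_CF_face} so that the pairs at $v_i$ and $v_j$ are $\{v_iv_{i+1},v_iy\}$ and $\{v_jv_{j+1},v_jz\}$, and split on where the other two edges $xu,xw$ of $x$ lie relative to $C=xv_iv_{i+1}\ldots v_jx$. But the case where $xu$ and $xw$ lie on opposite sides is only announced, and that is where the difficulty is.

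Let $R_1$ be the side of $C$ not containing $F$, and $R_2$ the analogous side of $C'=xv_jv_{j+1}\ldots v_ix$. Say $xu$ lies in $R_1$ and $xw$ in $R_2$. Take $S_1$ to be $x,v_i,\ldots,v_{j-1}$ together with all vertices inside $R_1$; this is the cut that works in the same-side case. Its cut edges are now $v_{i-1}v_i$, $xv_j$, $v_{j-1}v_j$, $xw$, and possibly $v_iy$ and $v_jz$.

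$S_1$ fails precisely when $w=v_{i-1}$. The edge $xv_{i-1}$ must conflict with an adjacent edge. It cannot do so at $x$, whose pair is $\{xv_i,xv_j\}$. The unique pair at $v_{i-1}$ already contains $v_{i-1}v_i$, so $\{v_{i-1}v_i,v_{i-1}x\}$ is a conflicting pair inside the cut. Symmetrically, the analogous set for $R_2$ fails precisely when $u=v_{j-1}$.

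When both happen, neither natural side works and you need a third cut. For example, take $S_1$, delete the vertices enclosed (within $R_1$) by the triangle $xv_{j-1}v_j$, and add $v_{i-1}$ together with the vertices enclosed (within $R_2$) by the triangle $xv_{i-1}v_i$. Its cut edges are $v_{i-2}v_{i-1}$, $xv_j$, $v_{j-1}v_j$, and possibly the second non-face edges at $v_{i-1}$ and $v_{j-1}$. These are pairwise non-conflicting, because the pairs at $v_{i-1}$ and $v_{j-1}$ are now the two triangle pairs. None of this sub-case structure appears in your proposal.

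\textbf{Problems in the earlier steps.} Several of your intermediate candidate cuts are invalid and should be discarded, not ``adjusted''.
\begin{itemize}
\item Any side containing $x$ but neither $v_i$ nor $v_j$ cuts the conflicting pair $xv_i,xv_j$.
\item ``The inner side excluding $x$'' cuts both $v_iv_{i+1}$ and $v_iy$ whenever $y$ lies inside $C$. Your claim that inner vertices attach only to $x$ and to $v_{i+1},\ldots,v_{j-1}$ overlooks the edges $v_iy$ and $v_jz$.
\end{itemize}
In the same-side case (say both edges lie in $R_1$; the $R_2$ case follows by swapping the roles of $v_i$ and $v_j$), the construction must be specific. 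Place $x$ on the side of $u,w$, not on the opposite side as you wrote. Include $v_i$ but not $v_j$; adding $v_j$ would cut $v_jv_{j+1}$ and possibly its partner $v_jz$. You must then check that the partner of $v_{i-1}v_i$ at $v_{i-1}$ is not cut, which holds because $x$ is not adjacent to $v_{i-1}$ in this case. With these corrections the same-side case goes through. The proof remains incomplete until the opposite-sides case is done as above.
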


\begin{proof}[Sketch of proof]
Let $u$ and $w$ be the other neighbors of $x$. The sequence $xv_iv_{i+1}\ldots v_{j-1}v_j x$ forms a cycle. The proof is divided into three cases based on how many of the edges $xu$ and $xw$ are inside the region bounded by the cycle. In each case, we demonstrate a CF-cut.
\qed
\end{proof}

\begin{lemma}
\label{lem:CFFPT_case}
Suppose $G$ is a 4-regular planar graph with 1-regular conflicts and $F = v_0v_1v_2\ldots v_{\ell-1}v_0$, $\ell \ge 4$, be one of its CF faces. 
Suppose that for some $0 \leq i \leq \ell -1$,  the  vertices $v_i, v_{i+1}$, together with a vertex $x$ that is not part of $F$, form a triangle.
If the edges $xv_i$ and $xv_{i+1}$ conflict, then there  exists a CF-cut in $G$.
\end{lemma}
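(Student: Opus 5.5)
We argue by contradiction: assume $G$ is uncuttable, so by Observations \ref{obs:one_pair_conflict} and \ref{obs:necessary-4regular} every vertex carries exactly one conflicting pair and all conflicts are between adjacent edges. Write $v=v_i$, $w=v_{i+1}$, and assume $xv$ and $xw$ conflict, so this is the pair at $x$. Since $F$ is a CF face, Observation \ref{obs:conflicts_around_CF_face} lets us fix the orientation: say each boundary edge $v_jv_{j+1}$ conflicts with an edge at $v_j$ (the other orientation is symmetric). Then the conflicting pair at $w$ is $\{wv_{i+2}, e_w\}$ with $e_w$ a non-face edge at $w$, and the pair at $v$ is $\{vw, e_v\}$. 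Because the pair at $x$ is $\{xv,xw\}$, neither $xv$ nor $xw$ can equal $e_v$ or $e_w$. Hence $e_v$ is the fourth edge $vy$ at $v$ and $e_w$ is the fourth edge $wz$ at $w$, where $y,z\notin\{x\}$. Also $y,z\notin F$ by Lemma \ref{lem:no_edges_on_CF_face}, since $\ell\ge 4$.

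The natural candidate cut separates $S=\{v,w,x\}$ from the rest of the graph. Its edges are $vv_{i-1}$, $vy$, $wv_{i+2}$, $wz$, and the two other edges $xu_1,xu_2$ at $x$. Among these, $vy$ conflicts with $vw$ and $wz$ conflicts with $wv_{i+2}$, both inside $G[S]$, so these are harmless. The edge $vv_{i-1}$ conflicts with an edge at $v_{i-1}$, and since its conflict partner is not in this set it is also fine. The real problem is $wv_{i+2}$, which conflicts with $wz$; both lie in the cut. So the cut around $S$ fails, and we must modify it, for instance by moving $z$ or $v_{i+2}$ to the side of $S$. The edges $xu_1,xu_2$ are safe, because their partners lie at $u_1,u_2$ and pair with edges at those vertices, which are not in the cut unless $u_j\in\{y,z,v_{i-1},v_{i+2}\}$.

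In the plan, we first try $S'=\{v,w,x,v_{i+1+1}\}$, that is, we add $v_{i+2}$ to $S$. Now the face edge $v_{i+2}v_{i+3}$ enters the cut, and it conflicts with an edge at $v_{i+2}$. If that partner is a non-face edge at $v_{i+2}$, we can keep extending along the face. The general approach is to take $S=\{x\}\cup\{v_i,v_{i+1},\dots,v_k\}$, a consecutive arc of the face. Every vertex of the arc except the last has its conflicting pair internal or in the cut on only one side, so the only possible clash comes from the endpoints and from $x$'s neighbours. To close this off, we would instead try the complementary choice of adding $z$ to $S$. We then check $z$'s pair, using planarity: $z$ lies in the face bounded by $x,w,v_{i+2},\dots$ near $w$, and the conflict pair at $z$ must be adjacent edges at $z$.

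The main obstacle is this closing step, namely showing that some small set among $\{v,w,x\}\cup\{z\}$ or $\{v,w,x,v_{i+2}\}$ (or the union) always yields a conflict-free cut, given coincidences such as $z\in\{u_1,u_2\}$ or $y=z$. For each such coincidence we expect either a direct CF-cut or a reduction to Lemma \ref{lem:protect_from_far}. That reduction applies when some vertex has two conflicting edges reaching non-adjacent face vertices, for example when $z$ is adjacent to both $w$ and $v_{i+3}$. Failing that, we use Lemma \ref{lem:no_edges_on_CF_face}. We also need to make sure the complement side stays nonempty; this holds since $|V(G)|\ge 6$ given $\ell\ge4$. The case analysis over where $u_1,u_2,y,z$ sit relative to the triangle $xvw$ in the embedding is the bulk of the work.
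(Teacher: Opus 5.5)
\textbf{There is a genuine gap: the proposal never exhibits a CF-cut, and the candidate sets it relies on provably fail in the generic case.}

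\textbf{What is correct.} Assume $G$ is uncuttable and fix the orientation as you did. Then the pair at $v_i$ is $\{v_iv_{i+1},v_iy\}$ and the pair at $v_{i+1}$ is $\{v_{i+1}v_{i+2},v_{i+1}z\}$. The cut around $\{v_i,v_{i+1},x\}$ fails exactly because of the second pair.

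\textbf{Where it stops.} From there on the proposal is a plan, not a proof. No CF-cut is ever exhibited, and the step you call the main obstacle is only announced as something you expect to work.

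\textbf{Your bounded candidates cannot close the argument.} Suppose neither $z$ nor $v_{i+2}$ is adjacent to $x$; nothing in your argument excludes this.
In $\{v_i,v_{i+1},x,z\}$, the conflicting pair at $z$ leaves the set. This is because $zv_{i+1}$ is already paired with $v_{i+1}v_{i+2}$, and $zv_i$ (if $z=y$) is already paired with $v_iv_{i+1}$.
In $\{v_i,v_{i+1},x,v_{i+2}\}$, the pair $\{v_{i+2}v_{i+3},e_{i+2}\}$ lies entirely in the cut.
In the union of the two sets, an edge $zv_{i+2}$, if present, belongs to the pair of only one of $z,v_{i+2}$. The other vertex still has both conflicting edges in the cut.
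So the generic case, not the coincidences, is what is left open.

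\textbf{Growing an arc of $F$ together with $x$ does not help either.} Forced additions can swallow the whole graph. Take the square antiprism with outer face $F=v_0v_1v_2v_3$, inner cycle $xzpy$, and spokes $xv_0,xv_1,zv_1,zv_2,pv_2,pv_3,yv_3,yv_0$. Use the conflicting pairs $\{xv_0,xv_1\}$, $\{v_0v_1,v_0y\}$, $\{v_1v_2,v_1z\}$, $\{v_2v_3,v_2z\}$, $\{v_3v_0,v_3y\}$, $\{yx,yp\}$, $\{zx,zp\}$, $\{pv_2,pv_3\}$.
Any side containing $V(F)$ is forced to absorb $x$ and $p$, and then $y$ and $z$. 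Hence every CF-cut must split $F$. Here $\{v_0,v_1,x,z\}$ works only because $z$ happens to be adjacent to $x$.

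\textbf{Auxiliary claims that are wrong.}
The proposed reduction to Lemma~\ref{lem:protect_from_far} via ``$z$ adjacent to $w$ and $v_{i+3}$'' cannot occur. The edge $zv_{i+1}$ is already paired with $v_{i+1}v_{i+2}$, so it cannot conflict with $zv_{i+3}$.
The edge $v_iv_{i-1}$ is not automatically harmless, since its partner $e_{i-1}$ may be $v_{i-1}x$.
The fallback ``failing that, we use Lemma~\ref{lem:no_edges_on_CF_face}'' has no content.

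\textbf{What is missing.} You need a global use of planarity that controls where the forced propagation stops. One step you can actually prove along these lines is that the triangle $xv_iv_{i+1}$ must bound a face.
Suppose the set $I$ of vertices strictly inside the triangle, on the side away from $F$, is nonempty.
The only edges leaving $I$ are $v_iy$ and $v_{i+1}z$, whose partners are face edges, and edges $xu_j$.
The partner of $xu_j$ lies at $u_j$. It cannot go to $v_i$ or $v_{i+1}$, because those edges are already paired with face edges.
Hence $E(I,V\setminus I)$ is a CF-cut.
This only pins down the rotation at $v_i$, $v_{i+1}$ and $x$. The face case still needs an argument that produces a cut of unbounded size, possibly cutting across $F$, and your proposal contains nothing that does this.
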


We are now ready to prove Theorem \ref{thm:4-regular-planar} using the above lemmas.

\begin{proof}[Proof of Theorem \ref{thm:4-regular-planar}]
Suppose $G$ is 4-regular and planar. By Lemma \ref{lem:CF-regions-in-4regular-planar}, it follows that there exist at least two CF faces in the embedding of $G$. Lemma~\ref{lem:CF-trianle-in-4regular-planar} implies that if one of the CF faces is a triangle, then $G$ has a CF-cut unless $G$ is the octahedron graph. 

The remaining cases are when the CF faces are not triangles. We argue that there exists a CF-cut in these cases. Let $F = v_0v_1\ldots v_{\ell-1} v_0$ be a CF face, where $\ell \geq 4$. We can separate the face $F$ from the rest of the graph $G$, unless (i) all vertices of $G$ are on the face $F$, or  (ii) there exists a vertex $x$ and edges $v_ix$ and $v_jx$ such that the edges $v_ix$ and $v_jx$ are conflicting with each other. In  the former case, there exists a CF-cut because of Lemma \ref{lem:no_edges_on_CF_face}. In the latter case, if $v_i$ and $v_j$ are not adjacent in $F$, then Lemma \ref{lem:protect_from_far} implies that there is a CF-cut in $G$. If $v_i$ and $v_j$ are adjacent in $F$, Lemma \ref{lem:CFFPT_case} implies that there is a CF-cut in this case as well. 
\qed    
\end{proof}

\input{7-4reg-proof}

\section{CF-cut for Planar Graphs}
\label{sec:planar_CF-cut}

In this section, we show that the CF-cut problem is NP-complete when $G$ is planar, has maximum degree 5, and $\widehat{G}$ is 1-regular.

\begin{theorem}\label{thm:planarNPC} Given a planar graph $G$ with maximum degree 5 and 1-regular conflict graph $\widehat{G}$, it is NP-complete to decide if there is a CF-cut in $G$.
\end{theorem}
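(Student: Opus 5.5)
Membership in NP is immediate: given a set $F\subseteq E$, we check in polynomial time that $G-F$ is disconnected and that no edge of $\widehat{G}$ has both ends in $F$. For hardness we reduce from a known NP-complete planar problem, and we commit to \textsc{Planar Matching Cut} on graphs of maximum degree four, which is NP-complete by Bonsma \cite{bonsma2009complexity}. Given such an instance $H$ with a planar embedding, we build a planar $G$ of maximum degree 5 with a 1-regular $\widehat{G}$. The aim is that $G$ has a CF-cut if and only if $H$ has a matching cut.

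\textbf{Gadgets.} The construction replaces each vertex $v$ of $H$ by a \emph{rigid} planar gadget $R_v$. Rigid means that $R_v$ together with its internal conflict pairs is uncuttable, and moreover every CF-cut of $G$ keeps $R_v$ entirely on one side. A natural candidate for $R_v$ is a modified octahedron, or a member of the uncuttable triangulation family of Theorem~\ref{thm:easy_planar}. From this rigid core we drop one edge or subdivide slightly, so that a few degree-4 vertices become free ports; each port receives exactly one external edge, which keeps the maximum degree at most 5.

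Each edge $uv$ of $H$ is then replaced by a single edge, or a short ladder, between a port of $R_u$ and a port of $R_v$. The point of the port arrangement is the matching condition. For each pair of edges $e,f$ of $H$ sharing an endpoint $v$, consecutive in the rotation at $v$, we make a representative edge of $e$ conflict with a representative edge of $f$. Since $\deg_H(v)\le 4$, each edge has at most 4 neighbours in $L(H)$, while an edge of $G$ may conflict with only one other. So each $H$-edge needs several parallel representatives, say a ladder of 4 internally disjoint paths, each routed through a port. Any cut of the ladder must then take one edge from each path, and these copies can carry the different conflicts. Routing the ladder paths to neighbouring ports in rotation order preserves planarity, and degrees stay at most 5 because every port has degree 4 inside its gadget. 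All edges left unpaired are finally matched within rigid parts, without destroying uncuttability.

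\textbf{Correctness.} Forward direction: a matching cut $(A,B)$ of $H$ lifts to the cut of $G$ separating $\bigcup_{v\in A}R_v$ from the rest. By construction, conflicting representatives correspond to adjacent $H$-edges, and at most one of those is cut, so the lifted cut is conflict-free. Backward direction: given a CF-cut of $G$, rigidity forces each $R_v$ onto one side, which yields a partition of $V(H)$. If two cut $H$-edges shared an endpoint, their cut representatives would conflict. This is ensured by making every pair of edges at $v$ conflict through some representative, and the ladder copies supply enough representatives to realise all $\binom{4}{2}$ pairs. Hence the partition is a matching cut of $H$. One must also rule out CF-cuts that split a ladder internally; this follows by making ladder interiors rigid too, or by noting that such a split isolates only ladder vertices whose incident edges pairwise conflict.

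\textbf{Main obstacle.} The hard step is designing the rigid gadget. It must be planar, have maximum degree 5, carry 1-regular conflicts, and remain uncuttable after ports are attached. Uncuttability is fragile here, because any vertex whose incident edges are pairwise conflict-free yields a trivial cut, so every vertex must see at least one conflict pair among its incident edges. I would start from the octahedron of Figure~\ref{fig:octahedron}, glue copies along triangles to get larger uncuttable triangulations, and verify uncuttability by a case analysis in the style of Lemma~\ref{lem:CF-trianle-in-4regular-planar}. Along the way I would check that every minimal edge cut of the gadget contains a conflicting pair.
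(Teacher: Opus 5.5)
Your overall framework is a legitimate alternative to the paper's route. You reduce from Bonsma's \textsc{Matching Cut} on planar graphs of maximum degree $4$, blow each vertex of $H$ up into a rigid gadget, and replace each edge by a bundle of representatives carrying the line-graph conflicts. The paper instead goes from Clean 3-SAT, to the multigraph $G'$ of $s$--$t$ paths, to vertex expansion. The rigidity you need is also automatic once a gadget is uncuttable with purely internal conflicts.

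The genuine gap is the step you yourself call the main obstacle: you never produce the gadget, and the candidates you name fail.
\begin{itemize}
\item The dual of $P_{4t}$ (Theorem~\ref{thm:easy_planar}) has two vertices of degree $2t$, dual to the two $2t$-gonal faces. For $t\ge 3$ it breaks the degree bound.
\item The octahedron has only six vertices, hence at most six ports if degrees stay at most $5$. A degree-$4$ vertex of $H$ needs up to $24$ ports, and even by your count $16$.
\item Gluing octahedra along a triangle gives the shared vertices degree $6$, and each shared edge would have two conflict partners.
\item More fundamentally, every face of a triangulation has only three vertices. Yet all representatives of an $H$-edge $uv$ must attach to $R_v$ on the single face of $R_v$ that contains $R_u$.
\end{itemize}
What is needed is a planar, maximum-degree-$5$, uncuttable graph with $1$-regular conflicts having arbitrarily many degree-$4$ vertices on one face. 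The paper builds exactly this from the square $H_{2n}$ of $C_{2n}$ with conflicts $\{i,i+1\}\sim\{i,i+2\}$. Lemma~\ref{lem:singcut} shows that its unique CF-cut consists of all edges $\{i,i+1\}$. Lemma~\ref{lem:planaruncuttable} then destroys that cut: it subdivides $\{i,i+2\},\{i+2,i+4\},\{j,j+2\},\{j+2,j+4\}$, adds six edges, and imposes a specific conflict pattern that includes the long-range pair $\{i+1,i'\}$, $\{j+1,j'\}$.

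The edge encoding also needs repair.
\begin{itemize}
\item With maximum degree $4$, an edge $uv$ has up to $3+3=6$ neighbours in $L(H)$, not $4$.
\item You must give a conflict to \emph{every} pair of edges at $v$ (six pairs at a degree-$4$ vertex), not just rotation-consecutive pairs. Otherwise two opposite edges at a degree-$4$ vertex can be cut together, and your backward direction fails.
\item So $uv$ should get exactly $\deg_H(u)+\deg_H(v)-2$ representatives, one per conflicting pair. Each should be a single edge between distinct ports of $R_u$ and $R_v$.
\item Longer ladder paths do not work. An interior degree-$2$ vertex either has two non-conflicting edges, giving a trivial CF-cut, or spends its conflict on its own two edges. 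In any case the cut may choose which edge of the path to take, bypassing the conflict you attached.
\item With exactly one representative per conflicting pair, nothing is left unpaired. Your final step of matching leftovers inside the rigid parts becomes unnecessary. That step was harmful anyway: pairing two representatives of the same $H$-edge would forbid ever cutting it, and pairing with gadget edges would alter the gadget's internal conflicts and void its uncuttability.
\end{itemize}
With these corrections and the paper's gadget, your matching-cut route would go through, but as written the proof is incomplete.
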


The NP-completeness of CF-cut follows from a reduction from the Clean 3-SAT problem, which was shown to be NP-complete \cite{cygan2017hitting}.

\begin{definition}[Clean 3-SAT] A variant of $3$-SAT in which each variable appears exactly thrice, at least once positive and at least once negative. Each clause in a clean $3$-SAT formula can have two or three literals. A variable appears in a clause at most once. Moreover, by renaming a literal, we may assume that each variable in a given clean $3$-SAT instance appears twice in positive form and once in negated form. 
\end{definition}


It is straightforward to see that the CF-cut problem is in NP. We prove that Clean 3-SAT reduces to the CF-cut problem, where $G$ is a planar graph with maximum degree 5 and a 1-regular conflict graph $\widehat{G}$. Our reduction is based on the reduction used by \cite{rauch2025conflict}, but we use gadgets that are planar and uncuttable. 

In the first step of this reduction, we reduce a given instance of clean 3-SAT to a multigraph $G'$ with 1-regular conflict. We then construct a planar uncuttable gadget of maximum degree 5 with 1-regular conflict. Finally, we convert $G'$ into a simple planar graph with maximum degree 5 and 1-regular conflicts using the uncuttable gadget. The whole reduction takes $O(mn)$ steps, where $m$ is the number of clauses and $n$ is the number of variables in the clean 3-SAT instance.

\subsection{Reducing Clean 3-SAT to a multigraph $G'$ with 1-regular conflicts} \label{subsec:G'NP}

Suppose we are given a clean 3-SAT formula $\mathcal{F}$ with $n$ variables and $m$ clauses. Let the variables be $(x_1, x_2, \dots, x_n)$ and the clauses be $(C_1, C_2, \dots, C_m)$. 
We use $\mathcal{F} =  (x_1 \vee \bar{x}_3) \wedge (\bar{x}_2 \vee x_3) \wedge (\bar{x}_1 \vee x_2 \vee x_3) \wedge (x_1\vee x_2)$ as an example for illustration. The variable gadget we use in this reduction is shown in Figure~\ref{fig:var-gadget}.

To construct $G'$ from $\mathcal{F}$, we start with two special vertices $s$ and $t$. For every clause $C \in \mathcal{F}$, we add an $s-t$ path $P_C$ of length $|C|$ in ${G}'$. In the path $P_C$, when seen from the vertex $s$, the $j$th edge represents the $j$th literal in $C$. Now, for every pair of distinct edges $e, f \in E(P_C)$, we add an edge $e'$ parallel to $e$ and an edge $f'$ parallel to $f$, and we add a conflict between $e'$ and $f'$.

Now we modify each path $P_c$ as follows to implicitly create a variable gadget for each variable $x_i$ in $\mathcal F$:  we use $u_{i,k} v_{i,k}$, for $1 \leq i \leq n$ and $1 \leq k \leq 3$, to represent an edge $e \in E(P_C)$ corresponding to the $k^{th}$ occurrence of a variable $x_i$ in some clause $C \in \mathcal{F}$. We emphasize that all three occurrences of $x_i$ appear in different clauses $C \in \mathcal{F}$. 
Now, for every negative literal of $x_i$ in $C \in \mathcal F$, we replace its corresponding edge $uv \in P_C$ with a pair of parallel edges, $u_{i,2} v_{i,2}$. Of these parallel edges, one conflicts with the edge $u_{i,1} v_{i,1}$ and another conflicts with $u_{i,3} v_{i,3}$. The graph $G'$ is a planar multi-graph with maximum degree $4m$, as shown in Figure \ref{fig:G'_Complete}. This completes the construction.

\begin{figure}[h!]
\centering
\makebox[\textwidth]{%
\begin{minipage}[b]{0.35\textwidth}
\centering
\begin{tikzpicture}[scale=0.9, every node/.style={font=\normalsize}]
  \tikzstyle{vertex}=[circle,fill=black,inner sep=1.5pt]

  \node[vertex,label=left: $u_{i,1}$] (u1) at (0,0) {};
  \node[vertex,label=right: $v_{i,1}$] (v1) at (2,0) {};
  \draw (u1) -- (v1);
  \path (u1) -- (v1) coordinate[pos=0.5] (mid1); 

  \node[vertex,label=left: $u_{i,2}$] (u2) at (0,-1) {};
  \node[vertex,label=right: $v_{i,2}$] (v2) at (2,-1) {};
  \draw (u2) .. controls (0.8,-0.5) and (1.2,-0.5) .. (v2); 
  \draw (u2) .. controls (0.8,-1.5) and (1.2,-1.5) .. (v2); 
  \path (u2) .. controls (0.8,-0.5) and (1.2,-0.5) .. (v2) coordinate[pos=0.5] (midA);
  \path (u2) .. controls (0.8,-1.5) and (1.2,-1.5) .. (v2) coordinate[pos=0.5] (midB);

  \node[vertex,label=left:\normalsize $u_{i,3}$] (u3) at (0,-2) {};
  \node[vertex,label=right: $v_{i,3}$] (v3) at (2,-2) {};
  \draw (u3) -- (v3);
  \path (u3) -- (v3) coordinate[pos=0.5] (mid3); 

  \draw[dashed] (mid1) -- (midA);
  \draw[dashed] (mid3) -- (midB);

\end{tikzpicture}
\captionsetup{singlelinecheck=false}
    \caption{\small The variable gadget representing the three literals of the variable $x_i$. The parallel edges $u_{i,2}v_{i,2}$ represent the negated literal and the edges $u_{i,1}v_{i,1}$, $u_{i,3}v_{i,3}$ represent the positive literals. Dashed edges represent conflicts.}
    \label{fig:var-gadget}

\end{minipage}

\hspace{0.2in}

\begin{minipage}[b]{0.50\textwidth}
\begin{tikzpicture}[scale=0.7, every node/.style={font=\small}]
  \tikzstyle{vertex}=[circle,fill=black,inner sep=1.5pt]

  \node[vertex] (ui1) at (0,0) {};
  \node[vertex] (vi1) at (2,0) {};
  \draw (ui1) .. controls (0.7,0.3) and (1.3,0.3) .. (vi1)
       coordinate[pos=0.5] (mid-ui1vi1-top);
  \draw (ui1) .. controls (0.7,-0.3) and (1.3,-0.3) .. (vi1)
       coordinate[pos=0.5] (mid-ui1vi1-bot);

  \node[vertex] (uk1) at (4,0) {};
  \node[vertex] (vk1) at (6,0) {};
  \draw (uk1) .. controls (4.7,0.4) and (5.3,0.4) .. (vk1)
       coordinate[pos=0.5] (mid-uk1vk1-top);
  \draw (uk1) .. controls (4.7,0.0) and (5.3,0.0) .. (vk1)
       coordinate[pos=0.6] (mid-uk1vk1-mid);
  \draw (uk1) .. controls (4.7,-0.4) and (5.3,-0.4) .. (vk1)
       coordinate[pos=0.5] (mid-uk1vk1-bot);

  \node[vertex] (uj1) at (2,-1.5) {};
  \node[vertex] (vj1) at (4,-1.5) {};
  \draw (uj1) .. controls (2.7,-1.1) and (3.3,-1.1) .. (vj1)
       coordinate[pos=0.65] (mid-uj1vj1-top);
  \draw (uj1) .. controls (2.7,-1.5) and (3.3,-1.5) .. (vj1)
       coordinate[pos=0.75] (mid-uj1vj1-mid);
  \draw (uj1) .. controls (2.7,-1.9) and (3.3,-1.9) .. (vj1)
       coordinate[pos=0.5] (mid-uj1vj1-bot);

  \node[vertex] (vk2) at (6,-1.5) {};
  \draw (vj1) .. controls (4.7,-1.1) and (5.3,-1.1) .. (vk2)
       coordinate[pos=0.6] (mid-vj1vk2-top);
  \draw (vj1) .. controls (4.7,-1.9) and (5.3,-1.9) .. (vk2)
       coordinate[pos=0.3] (mid-vj1vk2-bot);

  \node[vertex] (ui2) at (0,-3) {};
  \node[vertex] (vi2) at (2,-3) {};
  \draw (ui2) .. controls (0.7,-2.5) and (1.3,-2.5) .. (vi2)
       coordinate[pos=0.5] (mid-ui2vi2-top);
  \draw (ui2) .. controls (0.7,-2.8) and (1.3,-2.8) .. (vi2)
       coordinate[pos=0.7] (mid-ui2vi2-st);
  \draw (ui2) .. controls (0.7,-3.2) and (1.3,-3.2) .. (vi2)
       coordinate[pos=0.7] (mid-ui2vi2-sb);
  \draw (ui2) .. controls (0.7,-3.5) and (1.3,-3.5) .. (vi2)
       coordinate[pos=0.5] (mid-ui2vi2-bot);

  \node[vertex] (vj2) at (4,-3) {};
  \draw (vi2) .. controls (2.7,-2.6) and (3.3,-2.6) .. (vj2)
       coordinate[pos=0.65] (mid-vi2vj2-top);
  \draw (vi2) .. controls (2.7,-3.0) and (3.3,-3.0) .. (vj2)
       coordinate[pos=0.3] (mid-vi2vj2-mid);
  \draw (vi2) .. controls (2.7,-3.4) and (3.3,-3.4) .. (vj2)
       coordinate[pos=0.7] (mid-vi2vj2-bot);

  \node[vertex] (vk3) at (6,-3) {};
  \draw (vj2) .. controls (4.7,-2.6) and (5.3,-2.6) .. (vk3)
       coordinate[pos=0.5] (mid-vj2vk3-top);
  \draw (vj2) .. controls (4.7,-3.0) and (5.3,-3.0) .. (vk3)
       coordinate[pos=0.3] (mid-vj2vk3-mid);
  \draw (vj2) .. controls (4.7,-3.4) and (5.3,-3.4) .. (vk3)
       coordinate[pos=0.5] (mid-vj2vk3-bot);

  \node[vertex] (ui3) at (0,-4.5) {};
  \node[vertex] (vi3) at (2,-4.5) {};
  \draw (ui3) .. controls (0.7,-4.1) and (1.3,-4.1) .. (vi3)
       coordinate[pos=0.5] (mid-ui3vi3-top);
  \draw (ui3) .. controls (0.7,-4.9) and (1.3,-4.9) .. (vi3)
       coordinate[pos=0.5] (mid-ui3vi3-bot);

  \node[vertex] (vj3) at (4,-4.5) {};
  \draw (vi3) .. controls (2.7,-4.1) and (3.3,-4.1) .. (vj3)
       coordinate[pos=0.5] (mid-vi3vj3-top);
  \draw (vi3) .. controls (2.7,-4.9) and (3.3,-4.9) .. (vj3)
       coordinate[pos=0.5] (mid-vi3vj3-bot);

  \draw[dashed,bend left=20] (mid-ui1vi1-top) to (mid-uk1vk1-top);
  \draw[dashed,bend left=0] (mid-ui1vi1-bot) to (mid-ui2vi2-top);
  \draw[dashed,bend left=0] (mid-ui2vi2-bot) to (mid-ui3vi3-top);
  \draw[dashed,bend right=20] (mid-ui3vi3-bot) to (mid-vi3vj3-bot);
  \draw[dashed,bend left=0] (mid-vi3vj3-top) to (mid-uj1vj1-bot);
  \draw[dashed,bend left=0] (mid-uj1vj1-top) to (mid-vi2vj2-top);
  \draw[dashed,bend left=20] (mid-ui2vi2-st)  to (mid-vi2vj2-mid);
  \draw[dashed,bend right=20] (mid-ui2vi2-sb)  to (mid-vj2vk3-bot);
  \draw[dashed,bend right=20] (mid-vi2vj2-bot) to (mid-vj2vk3-mid);
  \draw[dashed,bend left=0] (mid-vj2vk3-top) to (mid-uk1vk1-bot);
  \draw[dashed,bend left=0] (mid-uk1vk1-mid) to (mid-vj1vk2-top);
  \draw[dashed,bend left=20] (mid-vj1vk2-bot) to (mid-uj1vj1-mid);

  \draw[line width=0.9pt,rounded corners=4pt] (1.6,0.45) rectangle (4.4,-0.45); 
  \draw[line width=0.9pt,rounded corners=4pt] (-0.6,0.45) rectangle (0.6,-4.95); 
  \draw[line width=0.9pt,rounded corners=4pt] (5.4,0.45) rectangle (6.6,-4.95);  
  \draw[line width=0.9pt,rounded corners=3pt] (-0.2,-1.8) rectangle (2.3,-1.2); 
  \draw[line width=0.9pt,rounded corners=3pt] (3.6,-4.8) rectangle (6.3,-4.2); 

  \node[left=0.1cm] at (-0.6,0) { ${x_1, \bar{x}_3}$};
  \node[left=0.1cm] at (-0.6,-1.5) { ${\bar{x}_2, x_3}$};
  \node[left=0.1cm] at (-0.6,-3) {${\bar{x}_1, x_2, x_3}$};
  \node[left=0.1cm] at (-0.6,-4.5) { ${x_1, x_2}$};

  \node[above=0.3cm] at (0,0.47) { $s$};
  \node[above=0.3cm] at (1,0.45) { $x_1$};
  \node[above=0.3cm] at (3,0.45) { $x_2$};
  \node[above=0.3cm] at (5,0.45) { $x_3$};
  \node[above=0.3cm] at (6,0.47) { $t$};

\end{tikzpicture}
\captionsetup{singlelinecheck=false}
\caption{\small Completed $G'$ for $\mathcal{F} =  (x_1 \vee \bar{x}_3) \wedge (\bar{x}_2 \vee x_3) \wedge (\bar{x}_1 \vee x_2 \vee x_3) \wedge (x_1\vee x_2)$. Vertices inside overlapped rectangles are identified. Dashed edges represent conflicts.}
    \label{fig:G'_Complete}
    \end{minipage}
}
\vspace{-0.4in}
\end{figure}

\begin{lemma}
\label{lem:stcut}
If there is a CF-cut in the multi-graph $G'$ in the above construction then it is necessarily an $s-t$ cut.
\end{lemma}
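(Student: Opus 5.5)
Let $F$ be a CF-cut of $G'$ and let $A$ be the vertex set of the component of $G'-F$ containing $s$. We will show that $t\notin A$; then $F$ separates $s$ from $t$ and is an $s$--$t$ cut. The first step is to describe $G'$ after the identifications of Figure~\ref{fig:G'_Complete}. Every vertex other than $s$ and $t$ is an internal vertex of some path $P_C$. Identifying endpoints $u_{i,k}$ and $v_{i,k}$ only merges paths at vertices, so every vertex of $G'$ lies on some path from $s$ to $t$ made of path edges. Every edge of a path $P_C$ is replaced by a bundle of at least two parallel edges. The edge of $P_C$ for a literal gets one parallel copy from each pair $\{e,f\}$ with $e\neq f$ in $P_C$, so there are $|C|-1\ge 1$ copies besides the original. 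A negated literal is realised as a double edge. In every case, two endpoints that are adjacent in $G'$ are joined by at least two parallel edges.

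The key local fact concerns which parallel edges of one bundle may conflict with each other. I will check from the construction that no two parallel edges of the same bundle conflict. The conflict added for a pair $\{e,f\}$ goes between copies parallel to two \emph{distinct} edges $e\neq f$ of $P_C$. The variable-gadget conflicts go between edges of different occurrences $u_{i,k}v_{i,k}$, and these lie in different clauses, hence in different bundles. So, within a single bundle, no two edges conflict.

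Now suppose $t\in A$. Then $s$ and $t$ lie in the same component, and since $F$ is a cut there is some vertex $w\notin A$. By the first step, $w$ lies on an $s$--$t$ path of bundles $s=w_0,w_1,\dots,w_r=t$. Along this path some consecutive pair $w_j,w_{j+1}$ lies on different sides of the component boundary. All edges of the bundle between $w_j$ and $w_{j+1}$ then lie in $F$. Because the bundle has at least two edges, this requires $F$ to contain two parallel edges of the same bundle. Two such edges do not conflict, so this is not immediately a contradiction. The argument therefore has to use the edges' conflict partners instead. Each bundle edge conflicts with exactly one other edge, and this other edge lies in a different bundle or in the variable gadget.

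This is the step I expect to be the main obstacle. I would strengthen the local fact as follows. For an original path edge $e$ of $P_C$ with $|C|\ge 2$, the copy parallel to $e$ that was created for the pair $\{e,f\}$ conflicts with the copy parallel to $f$, which lies in the same path $P_C$. Cutting the whole bundle at $e$ therefore forbids $F$ from containing the matching copy at every other edge $f$ of $P_C$. Hence every other bundle of $P_C$ keeps at least one edge, so $P_C$ minus the bundle at $e$ stays connected on each side of $e$. This still allows a vertex $w$ to be split off. To rule that out, I would use the structure of the whole component containing $w$: it is a union of segments of paths, and every segment must be cut off at two bundles of the same path, or at gadget edges. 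Cutting two distinct bundles $e,f$ of one path $P_C$ completely is impossible, because it would need both members of the conflicting pair created for $\{e,f\}$. This leaves cuts that combine bundles of different paths through the identified gadget vertices. For these I would do a short case analysis on the $u_{i,k}$, $v_{i,k}$ identifications, which I expect to show that no closed region avoiding both $s$ and $t$ can be cut off. This gives the contradiction, so $t\notin A$.
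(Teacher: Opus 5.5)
You have the right key fact, and it is the same one the paper's proof rests on: for distinct edges $e\neq f$ of one clause path $P_C$, the copies created for the pair $\{e,f\}$ conflict, so $F$ contains at most one entire bundle of each $P_C$. The gap is in what follows. You claim this ``still allows a vertex $w$ to be split off'', and you defer the conclusion to a case analysis over ``identified gadget vertices'' and ``gadget edges'' that you only expect to work. As written, the proof never reaches a contradiction. The step you call the main obstacle is left open, and the claim that motivates it is false.

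The missing step follows at once from what you already proved. Fix any $P_C$.
\begin{itemize}
\item If no bundle of $P_C$ lies entirely in $F$, all of $P_C$ is connected in $G'-F$.
\item If exactly one bundle does, every other bundle keeps an edge. So the two pieces of $P_C$ on either side of that bundle are each connected in $G'-F$, one containing $s$ and the other $t$.
\end{itemize}
In both cases every vertex of $P_C$ lies in the component of $s$ or in the component of $t$. By your first step, every vertex of $G'$ lies on some $P_C$, so $G'-F$ has at most these two components. Since $G'-F$ is disconnected, $s$ and $t$ are separated.

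This uses only connectivity inside each single path, so no interaction between different paths needs to be analysed. In any case, the interaction you feared does not exist. The variable gadget only doubles negated-literal edges and adds conflicts between literal edges $u_{i,k}v_{i,k}$ on paths of different clauses. Those edges are themselves bundle edges, so there are no separate gadget edges. The gadget also identifies no vertices, so the paths $P_C$ share only $s$ and $t$.

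The paper's proof is this same argument written out concretely. It places the internal vertices $u,v$ of a three-literal path on the two sides of the cut and exhibits two conflicting copies in $M$ in each case.
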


\begin{proof} According to the construction of $G'$, there is an $s-t$ path $P_C$ of length $|C|$ for each clause $C \in \mathcal{F}$. Since each clause $C \in \mathcal{F}$ has two or three literals, each $s-t$ path has one or two intermediate vertices. We consider the case when $C$ has three literals and $P_C$ has two intermediate vertices. Let the intermediate vertices be $u$ and $v$. 
In this case, edges $su$, $uv$, and $vt$ will have two copies of parallel edges $(su)'$, $(uv)'$, and $(vt)'$ respectively. Of the two copies of $(su)'$, one conflicts with a copy of $uv'$ and another with a copy of $(vt)'$. The remaining copy of $(uv)'$ conflicts with the remaining copy of $(vt)'$. For the sake of contradiction, let us assume that there is a CF-cut $M = E(A, B)$ such that both $s$ and $t$ belong to the same set, say $B$, and $u, v \in A$. Then we have two conflicting edges $(su)', (vt)' \in M$, a contradiction. If $u \in A$ and $v \in B$, then there are two conflicting edges $(su)', (uv)' \in M$, again a contradiction.
If $u \in B$ and $v \in A$, then we have two conflicting edges $(uv)'$ and $(vt)'$ in $M$, which is a contradiction.
Therefore, all CF-cuts in $G'$ are $s-t$ cuts. This completes the proof for the case when $C$ has three literals. The case when $C$ has two literals can be reasoned similarly.
\qed
\end{proof}

\begin{lemma} \label{lem:G'NP} The multi-graph $G'$ constructed above has a CF-cut if and only if its corresponding clean 3-SAT formula $\mathcal{F}$ is satisfiable.
\end{lemma}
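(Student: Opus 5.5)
The plan is to prove the two directions separately, reading off the structure of a CF-cut path by path using the argument in the proof of Lemma~\ref{lem:stcut}. Throughout I assume, as in the construction, that the clause paths $P_C$ are internally vertex-disjoint and share only $s$ and $t$. Then every edge of $G'$ lies on exactly one path $P_C$. Its conflicts are of two kinds: \emph{clause conflicts}, between parallel copies sitting at two different positions of the same path, and \emph{variable conflicts}, between an edge of a negated occurrence $u_{i,2}v_{i,2}$ and one of the positive occurrence edges $u_{i,1}v_{i,1}$, $u_{i,3}v_{i,3}$.

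($\Rightarrow$) Let $M=E(A,B)$ be a CF-cut of $G'$. By Lemma~\ref{lem:stcut}, $s$ and $t$ lie on different sides, say $s\in A$ and $t\in B$. Fix a clause $C$. Since $P_C$ runs from $A$ to $B$, at least one position of $P_C$ has its endpoints on different sides. If two positions did, then $M$ would contain all parallel copies at both positions, including a conflicting pair; this is exactly the case analysis in the proof of Lemma~\ref{lem:stcut}. Hence exactly one position $j_C$ of $P_C$ is separated, and every edge parallel to it lies in $M$, in particular the literal edge(s) at that position. Define the assignment: $x_i$ is false if the negated occurrence of $x_i$ is the separated position of its clause, and true otherwise. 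We must check that the literal at each separated position is true. For a negated literal this holds by definition. For a positive occurrence $u_{i,1}v_{i,1}$ or $u_{i,3}v_{i,3}$ in $M$: if the negated occurrence were also separated, both of its parallel edges would be in $M$, and one of them conflicts with this positive edge. That is a contradiction, so $x_i$ is true. Hence every clause contains a true literal.

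($\Leftarrow$) Given a satisfying assignment, choose for each clause $C$ a position $j_C$ whose literal is true. Put into $A$ the vertex $s$ together with, on each path $P_C$, the internal vertices before position $j_C$; everything else goes into $B$. Since the paths are internally disjoint, this is well defined. The resulting $M=E(A,B)$ consists exactly of all edges parallel to the chosen positions, so $G'-M$ is disconnected. For conflict-freeness we check the two kinds of conflicts. Clause conflicts join copies at distinct positions of one path, and only one position per path is in $M$. Variable conflicts join a positive and a negated occurrence of the same variable, and these cannot both be chosen because they cannot both be true.

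The main obstacle is the bookkeeping in the forward direction: showing that exactly one position per path is cut. This needs the careful pairing of copies (including the two-literal case, with one copy per edge) and the assumption that paths share only $s$ and $t$. I would restate this cleanly as a claim extracted from the proof of Lemma~\ref{lem:stcut}.
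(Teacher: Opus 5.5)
Your proof is correct and follows the same route as the paper. A satisfying assignment yields a cut made of one true-literal position per clause path. Conversely, Lemma~\ref{lem:stcut} forces an $s$--$t$ cut, whose separated positions give a consistent assignment via the variable-gadget conflicts. Your version spells out details the paper only sketches: you fix the sides $A,B$ explicitly, define the assignment with false exactly when the negated occurrence is separated, and observe directly that two separated positions on one path would put a conflicting pair of copies into $M$.
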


\begin{proof} For every edge $e \in P_C$, let $S(e)$ denote a multi-set of edges containing $e$ and all its copy edges $e'$. The conflicts assigned in $G'$ ensure the following: whenever $S(e)$ corresponding to a positive literal $x_i$ becomes part of a CF-cut, the edges corresponding to the literal $\bar{x_i}$ do not, and vice versa. For one direction, let us assume that formula $\mathcal F$ is satisfiable and $\alpha$ is a satisfying assignment for $\mathcal F$. 
Each clause contains a true literal in $\alpha$, and we select the edges corresponding to one of the true literals to be in the cut. This ensures that $s$ and $t$ are separated. 



For the other direction, if $M$ is a CF-cut in $G'$, by Lemma \ref{lem:stcut}, $M$ must be an $s-t$ cut. Hence, for all clauses $C$ in $\mathcal{F}$, $M$ must contain the edges $S(e)$ for some $e \in P_C$. By setting the literals corresponding to the chosen edges to True, we recover a satisfying assignment to $\mathcal F$. The variable gadget ensures we cannot pick inconsistent assignments to any variable.
\qed
\end{proof}

\subsection{Constructing an uncuttable planar gadget}
This section explains how to obtain an uncuttable planar graph of maximum degree 5 and 1-regular conflicts from the square of an even cycle. 

We will use this gadget to convert the multigraph $G'$ obtained in the previous section into a simple planar graph with maximum degree 5.

\begin{lemma} \label{lem:singcut} Let $H_{2n}$ be the square of a cycle graph $C_{2n} \; (n \geq 3)$ and $\{0, \allowbreak 1, \allowbreak 2, \allowbreak \ldots, \allowbreak 2n-1\}$ be its vertices when read clockwise. Suppose we assign a conflict between edges $\{i, (i+1)\} \bmod 2n $ and $\{i, (i+2)\} \bmod 2n$ for $0 \leq i \leq 2n-1$. The only CF-cut that $H_{2n}$ possesses is the set of edges of the form $\{i, (i+1)\} \bmod 2n$ for $0 \leq i \leq 2n-1$.
\end{lemma}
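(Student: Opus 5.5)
The plan is to reduce the statement to a statement about 2-colourings of the vertices and then show that the conflict rule forces the colouring to alternate around the cycle.

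\textbf{Setting up.} First I will check that the proposed set $S=\{\{i,i+1\} : 0\le i\le 2n-1\}$ is indeed a CF-cut. Each short edge $\{i,i+1\}$ conflicts only with the long edge $\{i,i+2\}$, so $S$ is independent in $\widehat{H}_{2n}$. Removing $S$ leaves only the long edges $\{i,i+2\}$, which form two disjoint cycles, one on the even vertices and one on the odd vertices; here we use that $2n$ is even. Hence $H_{2n}-S$ is disconnected.

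For uniqueness, let $F$ be any CF-cut. Take a component $A$ of $H_{2n}-F$ and put $B=V\setminus A$, so both sides are nonempty and $E(A,B)\subseteq F$. Then $E(A,B)$ is conflict-free as well. I will phrase everything in terms of the 2-colouring $c$ that gives $A$ one colour and $B$ the other; all indices are taken mod $2n$.

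\textbf{Key propagation step.} For each $i$, the edges $\{i,i+1\}$ and $\{i,i+2\}$ conflict, so they cannot both lie in $E(A,B)$. Consequently, if $c(i)\neq c(i+1)$, then $\{i,i+2\}$ is not a cut edge, which forces $c(i+2)=c(i)\neq c(i+1)$. In other words, a colour change between $k$ and $k+1$ implies a colour change between $k+1$ and $k+2$.

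Since $A$ and $B$ are both nonempty, some consecutive pair $k,k+1$ receives different colours. Applying the propagation step repeatedly around the cycle shows that every consecutive pair receives different colours. So $c$ alternates, which means $\{A,B\}$ is exactly the partition into even and odd vertices, and therefore $E(A,B)=S$.

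\textbf{Closing the argument.} It remains to rule out that $F$ is strictly larger than $E(A,B)$. We know $S\subseteq F$. Any long edge $\{i,i+2\}$ conflicts with the short edge $\{i,i+1\}\in F$, so no long edge can be added to $F$. Hence $F=S$.

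I expect the main subtlety to be this last point, namely arguing that a CF-cut is a superset of some $E(A,B)$ with $A$ a component, and then excluding extra edges. Both steps are short once they are stated explicitly. The propagation step itself is immediate, and evenness of $2n$ is used only to confirm that the alternating colouring is consistent around the cycle.
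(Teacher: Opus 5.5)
Your proof is correct and follows the paper's argument. You check that the short edges form a CF-cut, then use the conflict with $\{j,j+2\}$ to propagate a side-change on $\{j,j+1\}$ to $\{j+1,j+2\}$ all the way around, and finally exclude every long edge because it conflicts with a short edge already in the cut. Your explicit reduction from an arbitrary disconnecting set $F$ to $E(A,B)$ with $A$ a component of $H_{2n}-F$ is a small point of extra care that the paper leaves implicit.
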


\begin{proof} As we can observe from Figure~\ref{fig:Square_C_2n}, in the square of every cycle graph $C_{2n} \; (n \geq 3)$, there are only two types of edges: $\{i, (i+1)\} \bmod 2n $ and $\{i, (i+2)\} \bmod 2n$ for $0 \leq i \leq 2n-1$. We can easily verify from Figure~\ref{fig:Square_C_2n} that by deleting all the edges of the form $\{i, (i+1) \} \bmod 2n$ for $0 \leq i \leq 2n-1$, the graph becomes disconnected and forms two cycle graphs, each being $C_n$: one on all odd vertices and another on all even vertices. Moreover, the edges $\{i, (i+1)\} \bmod 2n$ for $0 \leq i \leq 2n-1$, that form the cut, are conflict-free. We refer to this cut as~$M$.

To prove that $M$ is the only CF-cut in $H_{2n}$, 
let us consider an arbitrary CF-cut, say $M'$. There must be some vertex $j$, $0\leq j \leq 2n-1$, such that $j$ and $(j+1) \bmod 2n$ are on opposite sides of the cut. 

Let $M'$ divide the vertices into parts $A$ and $B$. We may assume that the vertex $j$ belongs to $A$, and the vertex $(j+1) \bmod 2n$ belongs to $B$. Since $\{j, (j+1) \} \bmod 2n $ and $\{j, (j+2) \}\bmod 2n$ for are conflicting, the vertex $(j+2) \bmod 2n$ cannot belong to the set $B$. This implies that $(j+2) \bmod 2n$ must be in $A$. Thus the edge $\{(j+1), (j+2)\} \bmod 2n$ must belong to the cut $M'$. Therefore, if $\{ i, (i+1)\} \bmod 2n$ belongs to the CF-cut $M'$, then the edge $\{(i+1) , (i+2) \}\bmod 2n$ also belong to $M'$ for $0 \leq i \leq 2n-1$. This implies that $M'$  contains all the edges in the cut $M$. For any edge outside $M$, it can be seen that there is an edge in $M$ that conflicts with it. Since $M'$ is conflict-free, this implies that $M' = M$.
\qed
\end{proof}

Now we modify the graph in Figure~\ref{fig:Square_C_2n} so that it becomes uncuttable. Below we discuss such a construction.

Let $H_{2n}$ be the square of $C_{2n} \; (n \geq 6)$ with 1-regular conflicts as shown in Figure~\ref{fig:Square_C_2n}. The vertices of even parity lie on the outer face of $H_{2n}$. We modify $H_{2n}$ into an uncuttable planar graph \(\widetilde{H}_{2n}\) as follows: we start with a vertex on the outer face of $H_{2n}$, say $i$ and consider its four immediate successors $i+1, i+2, i+3, i+4$. Similarly, we consider another vertex on the outer face $j$ such that $\{j, j+1, j+2, j+3, j+4\}$ is disjoint from 
$\{i, i+1, i+2, i+3, i+4\}$. All vertices are considered modulo $2n$.
After this, we introduce four new vertices, namely, $i', (i+2)', j'$ and $(j+2)'$ that split the edges $\{i, i+2\}, \{i+2, i+4\}, \{j, j+2\}$ and $\{j+2, j+4\}$ respectively. We then add six new edges: $\{i+1, i'\}, \{i+3, (i+2)'\},
 \{i', (i+2)'\}, \{j+1, j'\}, \{j+3, (j+2)'\}$ and $\{j', (j+2)'\}$.
We establish a 1-regular conflict between these newly introduced $10$ edges (four edges as a result of edge splits and six edges drawn separately as mentioned before) as depicted in Figure~\ref{fig:modified_C_2n}. This completes the construction of \(\widetilde{H}_{2n}\).

\begin{figure}[h!]

\vspace{-0.2in}
\captionsetup[subfigure]{skip=0pt}
\begin{subfigure}[b]{0.4\textwidth}
\begin{tikzpicture}[scale=1.8, every node/.style={font=\small}, xscale=-1]

\tikzstyle{vertex} = [circle, fill=black, inner sep=1.5pt]

  \def\Rout{1.2}   
  \def\Rin{0.7}    

  \foreach \i in {0,...,11} {
    \ifodd\i
      \node[vertex] (v\i) at ({\Rin*cos(360/12*\i)}, {\Rin*sin(360/12*\i)}) {};
    \else
      \node[vertex] (v\i) at ({\Rout*cos(360/12*\i)}, {\Rout*sin(360/12*\i)}) {};
    \fi
    \pgfmathsetmacro{\xoff}{\i == 6 || \i == 10 || \i == 0 ? 0.20 : 0.10}
    \pgfmathsetmacro{\xoff}{\i == 3 || \i == 9 ? 0 : \xoff}
    \pgfmathsetmacro{\yoff}{\i == 6 || \i == 10 || \i == 0 ? 0.0 : 0.15}

    \node at ($(v\i)+(\xoff,\yoff)$) {\small \i};

  }

  \foreach \i in {0,...,11} {
    \pgfmathtruncatemacro{\j}{mod(\i+1,12)}
    \pgfmathtruncatemacro{\k}{mod(\i+2,12)}
    \draw (v\i) -- (v\j);
    \draw (v\i) -- (v\k);
  }

  \draw[dashed, bend left=40] ($(v0)!0.7!(v1)$) to ($(v0)!0.4!(v2)$);
  \draw[dashed, bend right=40] ($(v1)!0.4!(v2)$) to ($(v1)!0.4!(v3)$);
  \draw[dashed, bend left=30] ($(v2)!0.6!(v3)$) to ($(v2)!0.4!(v4)$);
  \draw[dashed, bend right=30] ($(v3)!0.4!(v4)$) to ($(v3)!0.4!(v5)$);
  \draw[dashed, bend left=40] ($(v4)!0.6!(v5)$) to ($(v4)!0.4!(v6)$);
  \draw[dashed, bend right=30] ($(v5)!0.4!(v6)$) to ($(v5)!0.4!(v7)$);
  \draw[dashed, bend left=30] ($(v6)!0.6!(v7)$) to ($(v6)!0.4!(v8)$);
  \draw[dashed, bend right=30] ($(v7)!0.4!(v8)$) to ($(v7)!0.4!(v9)$);
  \draw[dashed, bend left=30] ($(v8)!0.5!(v9)$) to ($(v8)!0.4!(v10)$);
  \draw[dashed, bend right=40] ($(v9)!0.4!(v10)$) to ($(v9)!0.4!(v11)$);
  \draw[dashed, bend left=40] ($(v10)!0.6!(v11)$) to ($(v10)!0.35!(v0)$);
  \draw[dashed, bend right=30] ($(v11)!0.45!(v0)$) to ($(v11)!0.4!(v1)$);

\end{tikzpicture}
\caption{}
\label{fig:Square_C_2n}
\vspace{0.1in}

\end{subfigure}
\hfill
\begin{subfigure}[b]{0.4\textwidth}
\centering

\begin{tikzpicture}[scale=1.8, every node/.style={font=\small}, xscale=-1]
\tikzstyle{vertex} = [circle, fill=black, inner sep=1.5pt]

\def\Rout{1.2}   
\def\Rin{0.7}    

\foreach \i in {0,...,11} {
  \ifodd\i
    \node[vertex] (v\i) at ({\Rin*cos(360/12*\i)}, {\Rin*sin(360/12*\i)}) {};
  \else
    \node[vertex] (v\i) at ({\Rout*cos(360/12*\i)}, {\Rout*sin(360/12*\i)}) {};
  \fi
}

\node at ($(v0)+(0,0.12)$) {\small $i$};
\node at ($(v1)+(-0.2,-0.1)$) {\small $i+1$};
\node at ($(v2)+(0,0.12)$) {\small $i+2$};
\node at ($(v3)+(0,-0.19)$) {\small $i+3$};
\node at ($(v4)+(0,0.12)$) {\small $i+4$};

\node at ($(v6)+(0,0.12)$) {\small $j$};
\node at ($(v7)+(0.23,0.1)$) {\small $j+1$};
\node at ($(v8)+(0,-0.12)$) {\small $j+2$};
\node at ($(v9)+(0,0.15)$) {\small $j+3$};
\node at ($(v10)+(0,-0.12)$) {\small $j+4$};

\foreach \i in {0,...,11} {
  \pgfmathtruncatemacro{\j}{mod(\i+1,12)}
  \pgfmathtruncatemacro{\k}{mod(\i+2,12)}
  \draw (v\i) -- (v\j);
  \draw (v\i) -- (v\k);
}

\node[vertex,label=left:$i'$]   (vi')   at ($(v0)!0.5!(v2)$) {};
\node[vertex,label=above:$(i+2)'$] (vi2')  at ($(v2)!0.5!(v4)$) {};
\node[vertex,label=right:$j'$]    (vj')   at ($(v6)!0.5!(v8)$) {};
\node[vertex,label={[label distance=0.3cm]below:$(j+2)'$}] (vj2')  at ($(v8)!0.5!(v10)$) {};

\draw (v1) -- (vi');
\draw (v3) -- (vi2');
\draw (vi') .. controls ($(vi') + (0,0.8)$) and ($(vi2') + (1,0.6)$) .. node[pos=0.15, coordinate] (m_i_ip2p) {} (vi2');
\draw (v7) -- (vj');
\draw (v9) -- (vj2');
\draw (vj') .. controls ($(vj') + (0,-0.8)$) and ($(vj2') + (-1,-0.6)$) .. node[pos=0.15, coordinate] (m_jp_j2p) {} (vj2');

\coordinate (m_i_ip2)   at ($(vi')!0.5!(v2)$);     
\coordinate (m_i2p_i3)  at ($(vi2')!0.4!(v3)$);
\coordinate (m_i2p_i4)  at ($(vi2')!0.3!(v4)$);
\coordinate (m_j2p_j3)  at ($(vj2')!0.4!(v9)$);
\coordinate (m_j2p_j4)  at ($(vj2')!0.3!(v10)$);   
\coordinate (m_jp_j2)   at ($(vj')!0.5!(v8)$);     
\coordinate (m_i1_ip)   at ($(v1)!0.5!(vi')$);     
\coordinate (m_j1_jp)   at ($(v7)!0.5!(vj')$);     

\draw[dashed, bend left=35]  (m_i_ip2)  to (m_i_ip2p);   
\draw[dashed, bend left=25] (m_i2p_i3) to (m_i2p_i4);   
\draw[dashed, bend left=25] (m_j2p_j3) to (m_j2p_j4);   
\draw[dashed, bend left=35]  (m_jp_j2)  to (m_jp_j2p);   
\draw[dashed, bend right=25] (m_i1_ip)  to (m_j1_jp);    

\draw[dashed, bend left=40]  ($(v0)!0.55!(v1)$) to ($(v0)!0.3!(v2)$);
\draw[dashed, bend right=40] ($(v1)!0.35!(v2)$) to ($(v1)!0.35!(v3)$);
\draw[dashed, bend left=30]  ($(v2)!0.45!(v3)$) to ($(v2)!0.25!(v4)$);
\draw[dashed, bend right=30] ($(v3)!0.35!(v4)$) to ($(v3)!0.35!(v5)$);
\draw[dashed, bend left=40]  ($(v4)!0.5!(v5)$) to ($(v4)!0.3!(v6)$);
\draw[dashed, bend right=30] ($(v5)!0.35!(v6)$) to ($(v5)!0.35!(v7)$);
\draw[dashed, bend left=30]  ($(v6)!0.55!(v7)$) to ($(v6)!0.3!(v8)$);
\draw[dashed, bend right=30] ($(v7)!0.35!(v8)$) to ($(v7)!0.35!(v9)$);
\draw[dashed, bend left=30]  ($(v8)!0.45!(v9)$) to ($(v8)!0.25!(v10)$);
\draw[dashed, bend right=40] ($(v9)!0.35!(v10)$) to ($(v9)!0.35!(v11)$);
\draw[dashed, bend left=40]  ($(v10)!0.55!(v11)$) to ($(v10)!0.3!(v0)$);
\draw[dashed, bend right=30] ($(v11)!0.35!(v0)$) to ($(v11)!0.35!(v1)$);

\end{tikzpicture}

\vspace{-0.1in}
\caption{}
\label{fig:modified_C_2n}
\vspace{-0.1in}
\end{subfigure}
\caption{\small (a) Square of $C_{2n}$ ($n=6$) with $1$-regular conflict. Dashed edges represent conflicts. (b) Square of $C_{2n}$ when modified into \(\widetilde{H}_{2n}\).} 


\label{fig:Uncuttable_planar}
\vspace{-0.2in}
\end{figure}

We need the following lemma to complete the proof of Theorem \ref{thm:planarNPC}.

\begin{lemma}\label{lem:planaruncuttable}
For $n \geq 6$, the graph \(\widetilde{H}_{2n}\) constructed above is an uncuttable planar graph of max degree 5.
\end{lemma}

\begin{proof}
It is clear from Figure~\ref{fig:modified_C_2n} that the graph \(\widetilde{H}_{2n}\) has no crossing edges and is hence planar. For all values of $n \geq 6$, the graph has four vertices of degree $5$, namely $i+1, i+3, j+1$ and $j+3$, and the remaining vertices of degree $4$. 
Now we prove that the graph \(\widetilde{H}_{2n}\) is uncuttable.  
As mentioned in Lemma \ref{lem:singcut}, the graph $H_{2n}$ of Figure~\ref{fig:Square_C_2n} has exactly one CF-cut. We will show that the modifications resulting in \(\widetilde{H}_{2n}\) will eliminate this cut.

Now we use the following two cases to prove our claim:
\paragraph{Case 1 (The edge $\{(i+1) , i'\}$ is part of the CF-cut):} Let the cut be $M = (A, B)$. We assume that the vertex $i+1$ belongs to one set, say, $A$, and $i'$ belongs to another set, say, $B$. Now, the following two sub-cases arise. 

\paragraph{Sub-Case 1.1 ($i+2 \in A$):} Let $(i+2) \in A$. This means that the edge $\{(i+1), (i+2)\}$ does not belong to the CF-cut $M$ but the edge $\{i' , (i+2)\}$ does. This implies that the edge $\{i', (i+2)' \}) \notin M$ and the vertex $(i+2)' \in B$. Since, edges $\{(i+2) , (i+3) \}$ and $\{(i+2), (i+2)' \}$ are conflicting, $(i+3)  \in A$. Similarly, the conflict between the edges $\{(i+2)', (i+3) \}$ and $\{(i+2)', (i+4) \}$ implies that vertex $(i+4)  \in B$ and the edge $\{(i+3) , (i+4) \} \in M$. At this point, we can propagate this using an argument similar to the one in the proof of Lemma \ref{lem:singcut}, and infer that $\{j, (j+1)\} \in M$. Since the edges $\{j, j'\}$ and $\{j, j+1\}$ are conflicting, $j'$ is in the same side of the cut as $j$ and the edge $\{j', (j+1)\}\in M$. This contradicts our assumption of Case 1 that the edge $\{(i+1),i'\} \in M$. 

\paragraph{Sub-Case 1.2 ($(i+2) \in B$):} In this case, the edge $\{i', (i+2) \}$ does not belong to the CF-cut $M$ but $\{(i+1), (i+2) \}$ does. This implies that the edge $\{(i+1) , (i+3) \} \notin M$ and the vertex $(i+3) \in A$. Since the edges $\{(i+2), (i+3) \}$ and $\{(i+2), (i+2)' \}$ are conflicting, it follows that $(i+2)' \in B$. Similarly, the conflict between the edges $\{(i+2)', (i+3) \}$ and $\{(i+2)', (i+4) \}$ implies $(i+4) \in B$. Thus the edge $\{(i+3), (i+4) \} \in M$. From this point, we can reason in the same manner as in Subcase 1.1, and reach a contradiction where two conflicting edges $\{(i+1), i'\}$, and $\{j', (j+1) \}$ are both found to be in $M$.

\paragraph{Case 2 (The edge $\{(i+1), i'\}$ is not in the CF-cut):} Let both the vertices $(i+1)$ and $i'$ belong to $A$. Since the edges $\{i, i'\}$ and $\{i, (i+1)\}$ are conflicting, the vertex $i \in A$. We can use the argument moving in an anti-clockwise manner to infer that vertices $i-1, i-2, \ldots$ are in $A$. We can thus infer that vertices $(j+4), (j+3), (j+2)', (j+2), (j+1) $ are all in $A$. Since the edges $\{j', (j+2) \}$ and $\{j', (j+2)' \}$ conflict, it follows that $j'$ is in $A$ as well. Now due to the conflicts, it follows that $j, j-1, j-2, \ldots $ are all in $A$. It follows that all the vertices are in $A$ and hence there is no cut in this case. 

From the above arguments, we can say that the graph \(\widetilde{H}_{2n}\) as shown in Figure~\ref{fig:Uncuttable_planar} is an uncuttable, planar graph of maximum degree 5. This completes the proof. 
\qed 
\end{proof}



Now we are ready to prove Theorem \ref{thm:planarNPC}.

\begin{proof}[Proof of Theorem \ref{thm:planarNPC}.] Recall that we have shown that the multigraph $G'$ has an $s-t$ cut if and only if the formula $\mathcal F$ is satisfiable. We need to obtain a planar simple graph from $G'$. We replace each vertex of $G'$ with a sufficiently large uncuttable planar graph \(\widetilde{H}_{2t}\). Since the maximum degree of $G'$ is $4m$ (where $m$ is the number of clauses in $\mathcal F$), setting $t = 4m$ suffices.
The adjacencies of a vertex $v \in G'$ are distributed among the vertices on the outer face of \(\widetilde{H}_{2t}\) in such a way that each vertex on the outer face of \(\widetilde{H}_{2t}\) is incident with at most one edge of $v$. Note that all the vertices on the outer face of \(\widetilde{H}_{2t}\) are of degree 4. 
Thus we get an uncuttable, planar graph with 1-regular conflicts and maximum degree 5. 
The graph $G'$ has at most $3n-1$ vertices and at most $10n$ edges. 
The gadget \(\widetilde{H}_{2t}\) can  be constructed in polynomial time. This completes the reduction from a clean 3-SAT instance to an uncuttable, simple, planar graph.
\qed
\end{proof}


\subsection{NP-completeness for maximum degree beyond 5}

Though it is not surprising, one may wonder if the NP-completeness result holds for planar graphs that have maximum degree larger than 5. We establish below that the NP-completeness result in Theorem \ref{thm:planarNPC} can be extended for planar $G$ with maximum degree greater than 5 as well.  We need the following lemma.

\begin{lemma}\label{lem:extendH}
Let \(H\) be a simple connected graph with $1$-regular conflicts. We obtain a new graph $H'$ with vertex set $V(H') = V(H) \cup \{u_1\}$
and edge set $E(H') = E(H) \cup \{u_1v_1, u_1v_2\}$, where $v_1, v_2$ are two vertices in $H$. Further we introduce conflicts between the new edges $u_1 v_1$ and $u_1v_2$.
Then $H'$
is an uncuttable, 3-degenerate graph if and only if $H$ is uncuttable and 3-degenerate.
\end{lemma}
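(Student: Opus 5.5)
We treat the two properties separately: 3-degeneracy and uncuttability. Throughout, we use that in $H'$ the edges of $H$ keep their conflicts from $H$, and the two new edges $u_1v_1, u_1v_2$ conflict only with each other, so $\widehat{H'}$ is again 1-regular.

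For 3-degeneracy, the vertex $u_1$ has degree 2 in $H'$. If $H$ is 3-degenerate with a valid deletion order, then deleting $u_1$ first and then following that order shows $H'$ is 3-degenerate. Conversely, $H = H' - u_1$ is an induced subgraph of $H'$, and 3-degeneracy is closed under taking subgraphs: restricting the ordering of $H'$ to $V(H)$ gives each vertex at most as many earlier neighbours as before. So $H'$ is 3-degenerate if and only if $H$ is.

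For uncuttability, first suppose $H$ has a CF-cut, say a partition $V(H)=A\cup B$ with $E(A,B)$ independent in $\widehat{H}$. There are two cases.
\begin{itemize}
\item If $v_1$ and $v_2$ lie on the same side, say $A$, we put $u_1$ into $A$. The cut in $H'$ is then exactly $E(A,B)$, which is conflict-free.
\item If $v_1 \in A$ and $v_2 \in B$, we put $u_1$ into $A$. Then the only new cut edge is $u_1v_2$. Its unique conflict partner is $u_1v_1$, which is not in the cut, and it conflicts with nothing in $E(H)$.
\end{itemize}
In both cases $H'$ has a CF-cut. Hence if $H'$ is uncuttable, so is $H$.

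Conversely, suppose $H'$ has a CF-cut with sides $A', B'$, and say $u_1 \in A'$. Since $u_1v_1$ and $u_1v_2$ conflict, at most one of them is in the cut, so at least one of $v_1, v_2$ lies in $A'$. Let $A = A' \setminus \{u_1\}$ and $B = B'$.
\begin{itemize}
\item If $A \neq \emptyset$, then $A, B$ partition $V(H)$ into nonempty parts, and $E_H(A,B) \subseteq E_{H'}(A',B')$ is still conflict-free. This gives a CF-cut of $H$.
\item If $A = \emptyset$, then $A' = \{u_1\}$ and both $v_1, v_2 \in B'$. But then both conflicting edges $u_1v_1, u_1v_2$ lie in the cut, a contradiction.
\end{itemize}
So $H$ has a CF-cut whenever $H'$ does.

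The main point to check is that the problem uses the definition ``$G-F$ is disconnected''. With that definition, a non-minimal cut $E(A,B)$ still witnesses disconnection, and any subset of a conflict-free set is conflict-free, so the partition viewpoint used above is valid. We also use that $H$ is connected, so that $H'$ is connected and the problem is well posed. The only real obstacle is the degenerate case $A' = \{u_1\}$, which is exactly where the conflict between $u_1v_1$ and $u_1v_2$ is needed.
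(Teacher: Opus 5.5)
Your proof is correct and follows essentially the same route as the paper. The paper also handles 3-degeneracy via the degree-2 vertex $u_1$, and handles uncuttability by the same two-way argument: a CF-cut of $H'$ must keep $V(H)$ on one side (otherwise it restricts to a CF-cut of $H$) and cannot isolate $u_1$ because $u_1v_1$ and $u_1v_2$ conflict, while a CF-cut of $H$ extends to $H'$ by placing $u_1$ beside $v_1$ or $v_2$. Your write-up is slightly more explicit than the paper's, in that you check that the restricted partition has two nonempty sides, that $E_H(A,B)\subseteq E_{H'}(A',B')$ stays conflict-free, and that the partition viewpoint matches the ``$G-F$ disconnected'' definition.
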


\begin{proof}
An example of the construction is illustrated in  Figure~\ref{fig:Uncuttable_3-degenerate}. Let $H$ be an uncuttable $3$-degenerate graph with $1$-regular conflicts. By the definition of $3$-degeneracy, after adding a vertex $u_1$ of degree $2$ to $H$, the resulting graph is still $3$-degenerate. To check if \(H'\) remains uncuttable, for the sake of contradiction, consider a CF-cut $M' = E(A, B)$ of $H'$. Since \(H\) is uncuttable, all its vertices belong to one set, say $A$. The only possible candidate for the set $B$ would be the vertex $u_1$. But the edges $u_1v_1$ and $u_1v_2$  are conflicting, resulting in a contradiction.

For the other direction, if \(H'\) is $3$-degenerate then deleting a degree $2$ vertex, $u_1$, does not change its $3$-degeneracy. To prove that an uncuttable \(H'\) implies an uncuttable \(H\), for the sake of contradiction, assume that \(H\) has a CF-cut $M = E(A, B)$. There are two cases. First, suppose that both $v_1, v_2$ are on the same side, say $A$. Then the cut $M' = E(A\cup \{u_1\}, B)$ is a valid CF-cut for $H'$, which is a contradiction. If $v_1$ and $v_2$ are on different sides of the cut, then adding $u_1$ to either side results in a valid CF-cut for $H'$, again a contradiction.
\qed
\end{proof}

\begin{theorem}
     Let $D \geq 5$ be a positive integer. The CF-cut problem is NP-complete even when restricted to planar graphs $G$ with maximum degree equal to $D$ and 1-regular 
conflicts.
\end{theorem}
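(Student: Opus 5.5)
Membership in NP is immediate. For hardness I will reduce from the maximum-degree-5 case of Theorem~\ref{thm:planarNPC}. Let $G$ be a planar graph with maximum degree at most 5 and 1-regular $\widehat{G}$. If $D=5$, we must make sure the maximum degree is exactly 5 and not smaller. Degree exactly 5 can always be forced by attaching an uncuttable gadget $\widetilde{H}_{12}$ (which has vertices of degree 5, by Lemma~\ref{lem:planaruncuttable}) in a way that preserves cuttability; the attachment described below handles this.

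The plan is to glue to $G$ a planar uncuttable graph $K$ that has a vertex of degree exactly $D$. First we build $K$ with the pendant construction of Lemma~\ref{lem:extendH}. Start from $H=\widetilde{H}_{12}$, which is planar and uncuttable with maximum degree 5 by Lemma~\ref{lem:planaruncuttable}. Choose a vertex $w$ of degree 4 on its outer face, for example an outer vertex away from the modified region, and one outer-face neighbour $w'$ of $w$. Repeatedly add a new vertex $u_k$ adjacent to $w$ and $w'$, with the two new edges conflicting with each other. The argument of Lemma~\ref{lem:extendH} shows that uncuttability is preserved at each step: the only candidate cut isolates $u_k$, and it is blocked by the conflict between its two edges. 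That argument uses only uncuttability, not 3-degeneracy. Each new vertex can be drawn in the outer face next to the edge $ww'$, nesting the triangles $w w' u_k$, so planarity is kept.

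After $D-4$ additions, $w$ has degree $D$. However, $w'$ also gains degree, so the construction has to be adjusted. Instead of always using $w'$, use distinct partners: attach $u_k$ to $w$ and to a different outer neighbour, or attach $u_k$ to $w$ and to $u_{k-1}$. The chain version, where $u_k$ is adjacent to $w$ and $u_{k-1}$ and $u_1$ is adjacent to $w$ and $w'$, gives every new vertex degree at most 3. It raises $w'$ by one (to 5) and $w$ by $D-4$. The nested structure is a fan around $w$, which is planar. This gives the uncuttable planar graph $K$ with maximum degree exactly $\max(D,5)=D$.

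Next we combine. Let $G^*$ be obtained by identifying a vertex of $G$ with a vertex of $K$, or better, by joining them by conflicting pairs, so that degrees are controlled. The cleanest option is to add two edges $ab$ and $a'b'$, conflicting with each other, where $a,a'\in V(G)$ have degree at most 4 and $b,b'$ are low-degree vertices of $K$. Such vertices exist in $G$: the reduction of Theorem~\ref{thm:planarNPC} has degree-4 outer vertices. Both edges are drawn in a common face, so the result is planar. We then show that $G^*$ has a CF-cut if and only if $G$ does. If $G$ has a CF-cut $E(A,B)$, put all of $K$ on the side of $a$. If $a$ and $a'$ lie on opposite sides, exactly one of the two new edges is cut, which is fine. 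Conversely, in any CF-cut of $G^*$, $K$ lies entirely on one side because it is uncuttable. The two conflicting connecting edges cannot both be cut, so the restriction to $G$ is a nonempty cut. It is conflict-free since conflicts within $G$ are unchanged.

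The main obstacle is bookkeeping: ensuring that the maximum degree is exactly $D$ and never exceeds it, that planarity holds, and that $G$ genuinely has vertices of spare degree to attach to. If the last point fails, we instead subdivide-free attach through a vertex of $K$ only.
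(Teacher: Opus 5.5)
Your argument is correct, but it is organized differently from the paper's.

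The paper never builds a separate gadget. It applies the construction of Lemma~\ref{lem:extendH} directly to the hard instance $G$ produced by Theorem~\ref{thm:planarNPC}. It repeatedly adds a new vertex joined by a conflicting pair of edges to two vertices $v_1,v_2$ on a common face, until some vertex reaches degree $D$. This relies on the fact that the proof of Lemma~\ref{lem:extendH} shows each step preserves both the existence and the non-existence of a CF-cut.

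You use only the easy direction of that lemma (uncuttability is preserved) to build an uncuttable planar graph $K$ with a degree-$D$ vertex. You then prove a separate gluing statement: linking any uncuttable $K$ to $G$ by one conflicting pair $ab,a'b'$ does not change whether a CF-cut exists. This holds because $K$ cannot be split, and the conflicting pair prevents a cut that merely detaches $K$.

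The paper's route is shorter and needs no search for attachment vertices with spare degree. Yours isolates a reusable padding principle (attach an arbitrary uncuttable graph through one conflicting pair), at the cost of degree bookkeeping at $a,a',b,b'$.

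Two minor points:
\begin{itemize}
\item The fan adjustment is unnecessary. Joining all $D-4$ new vertices to both $w$ and $w'$ raises both from $4$ to exactly $D$, never above it.
\item Your vague closing fallback sentence can be dropped. For $D\ge 6$, every vertex of $G$ has degree at most $5\le D-1$, so any two vertices on a common face can serve as $a,a'$. For $D=5$, the output of Theorem~\ref{thm:planarNPC} already has maximum degree exactly $5$, because its gadgets contain untouched degree-$5$ vertices, so nothing needs to be attached.
\end{itemize}
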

\begin{proof}
    The proof of Theorem \ref{thm:planarNPC} uses a reduction from Clean 3-SAT. The resulting graph $G$ is a planar graph with maximum degree 5. For $D > 5$, we can use Lemma \ref{lem:extendH} to produce graphs that have maximum degree $D$ as desired and are uncuttable  if and only if $G$ is uncuttable. By choosing $v_1, v_2$ from the same face, it can be ensured that the resulting graph remains planar. 
    \qed
\end{proof}

\section{CF-cut on 3-Degenerate Graphs}
\label{sec:3dg}

In this section, we show that the CF-Cut problem is NP-complete when $G$ is 3-degenerate and has maximum degree 5, with a 1-regular conflict graph $\widehat{G}$.




\begin{theorem}
\label{thm:3-degenerate_graph}
Given a 3-degenerate graph $G = (V, E)$ of maximum degree $5$ and $1$-regular conflict graph  $\widehat{G}$, it is NP-complete to decide if a CF-cut exists in~$G$.
\end{theorem}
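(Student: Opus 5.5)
Membership in NP is immediate, so the work is hardness. I would reuse the reduction of Section~\ref{sec:planar_CF-cut}: start from a Clean 3-SAT formula $\mathcal F$, build the multigraph $G'$ of Subsection~\ref{subsec:G'NP}, and use Lemmas~\ref{lem:stcut} and~\ref{lem:G'NP}. These tell us that $G'$ has a CF-cut iff $\mathcal F$ is satisfiable, and that every CF-cut of $G'$ is an $s$-$t$ cut. The only change is the gadget used to blow up the vertices of $G'$. The gadget $\widetilde H_{2t}$ is 4-regular apart from four degree-5 vertices, so it has minimum degree at least $4$ and is therefore not 3-degenerate. It must be replaced by an uncuttable gadget that is 3-degenerate, has maximum degree 5, and has many vertices of degree at most $4$ available as attachment points. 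Planarity is no longer required, which gives extra freedom.

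To build the gadget I would use Lemma~\ref{lem:extendH} as the main tool. It says that adding a vertex of degree $2$ whose two new edges conflict with each other preserves both uncuttability and 3-degeneracy, and it works in both directions. Starting from a small uncuttable 3-degenerate base graph $H_0$, for instance one from the family of Lemma~\ref{lem:uncuttableH''} (if that family is 3-degenerate), I would repeatedly attach such degree-2 vertices. This grows a graph $H$ with arbitrarily many vertices of degree $2$ or $3$. Each attachment raises the degree of its two host vertices by one, so I would choose hosts so that no vertex exceeds degree $4$ in the finished gadget. One way is to attach new vertices in a chain, taking the hosts of each new vertex to be previously added vertices of degree $2$. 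This yields a gadget $H_t$ in which every vertex has degree at most $5$ and there are at least $4m$ designated ports of degree at most $4$.

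Next I would replace each vertex $v$ of $G'$ by a copy of $H_{4m}$ and distribute the edges of $v$ among distinct ports, each port receiving at most one external edge. Conflicts inside each gadget stay as they are, and conflicts on the edges of $G'$ stay as in $G'$, so $\widehat G$ remains 1-regular and the maximum degree is at most $5$. Since each gadget is uncuttable and connected, every CF-cut of the new graph keeps each gadget entirely on one side. Its restriction to the edges between gadgets is therefore a CF-cut of $G'$, and conversely every CF-cut of $G'$ lifts. Equivalence with satisfiability then follows from Lemma~\ref{lem:G'NP}.

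For 3-degeneracy, note that $G'$ itself has multi-edges but few vertices. I would order the vertices by deleting gadgets one at a time. Within a gadget, a vertex of degree at most $3$ can be peeled first, and the gadget's internal degeneracy order handles the rest. The issue is that ports gain one external edge, so peeling must begin at vertices that have no external edge, or at ports whose external partners have already been removed. The main obstacle is exactly this step: making sure the peeling order of $H_t$ starts at its attachment-free, low-degree vertices, so that each port has internal degree at most $2$ at the moment it is deleted. I would address this by choosing the degree-2 Lemma~\ref{lem:extendH} vertices as the ports, so that each port has internal degree exactly $2$ and total degree $3$. Deleting all ports of all gadgets first (each has degree at most $3$), and then each 3-degenerate gadget core, completes the order.
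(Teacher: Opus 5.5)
Your outline is the paper's proof. You reuse $G'$ and Lemma~\ref{lem:G'NP}, replace each vertex by an uncuttable 3-degenerate gadget of maximum degree $5$ grown from a base by Lemma~\ref{lem:extendH}, give each port at most one external edge, and peel the attached vertices before the base. Your hedge is unnecessary: $H$ is 3-degenerate by Lemma~\ref{lem:small-3dg}, and Lemma~\ref{lem:extendH} preserves this, so the family of Lemma~\ref{lem:uncuttableH''} is 3-degenerate. The paper simply uses $H_i$ with $i\ge 4m/7$ as the gadget.

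\textbf{The gap.} Your gadget and your peeling order are incompatible.
\begin{itemize}
\item If the two hosts of a new vertex must have degree $2$ at that moment, each attachment turns two degree-$2$ vertices into degree-$3$ ones and creates only one new degree-$2$ vertex. So the chain stalls after boundedly many steps, and you never reach $4m$ ports.
\item More fundamentally, your final order needs $4m$ ports of internal degree exactly $2$, i.e.\ attached vertices that host nothing. Suppose every attached vertex ends with degree at most $4$, as you require. Let $a_k$ be the number of attached vertices hosting exactly $k\le 2$ later vertices, and $b$ the number of host edges ending in the base. Counting host edges gives $2(a_0+a_1+a_2)=a_1+2a_2+b$, so $a_0\le b/2$. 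This is bounded by the spare capacity of the base, a constant for a fixed base such as $H$.
\end{itemize}
Hence most ports have internal degree $3$ or $4$. Such a port has degree up to $5$ while the vertices it hosts are still present, so ``delete all ports first, each of degree at most $3$'' is not a valid order.

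\textbf{The fix.} You named the right target: internal degree at most $2$ \emph{at the moment of deletion}. The missing step is to reach it by deleting each gadget's attached vertices in reverse order of attachment, which is what the paper does. When an attached vertex $u$ is removed, everything attached after it is already gone. So $u$ keeps only its two edges to its hosts and at most one external edge, giving degree at most $3$ however many vertices $u$ hosted. This allows ports of internal degree $4$, which is exactly what $H_i$ supplies: $1+7i$ of them, plus four of degree $2$. Once a gadget's attached vertices are gone, what remains is the base $H$. Its vertices already have degree $5$ in $H_i$, so they carry no external edges, and $H$ is an isolated 3-degenerate component. If you prefer ports that are genuine leaves, that is also possible, but only if you hang them on non-port core vertices that are allowed to reach degree $5$.
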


The proof starts with the graph $G'$ derived in Section \ref{subsec:G'NP} but we need a different gadget as the goal is a 3-degenerate graph. 
We first observe that the graph $H$ in  Figure \ref{fig:Uncuttable_3-degenerate}(a) is 3-degenerate, has maximum degree 5 and is uncuttable for the assigned conflicts. Lemma \ref{lem:uncuttableH''} builds upon this graph to obtain a class of uncuttable 3-degenerate graphs.

\begin{figure}[h!]
\centering

\begin{minipage}{0.25\textwidth}
\centering
\begin{tikzpicture}[scale=0.8, every node/.style={font=\small}, thick]

\tikzstyle{vertex} = [circle, fill=black, inner sep=1.5pt]

\node[vertex] (1) at (-0.6,2) {};
\node[above=0.5pt of 1] { $v_1$};

\node[vertex] (2) at (0.6,2) {};
\node[above=0.5pt of 2] { $v_2$};

\node[vertex] (4) at (0,0.8) {};
\node[right=2pt of 4] { $v_4$};

\node[vertex] (3) at (-0.8,0.4) {};
\node[below=2pt of 3] { $v_3$};

\node[vertex] (5) at (0.8,0.4) {};
\node[below=2pt of 5] { $v_5$};

\node[vertex] (6) at (0,-1) {};
\node[below=2pt of 6] { $v_6$};

\draw (1) -- (4);
\draw (1) -- (3);
\draw (1) to[bend right=65] node[pos=0.25, coordinate] (mid16) {} (6);
\draw (2) -- (4);
\draw (2) to[bend left=65] node[pos=0.25, coordinate] (mid26) {} (6);
\draw (2) -- (5);
\draw (3) -- (4);
\draw (3) to[bend right=25] node[pos=0.25, coordinate] (mid35) {} (5);
\draw (3) -- (6);
\draw (4) -- (5);
\draw (4) -- (6);
\draw (5) -- (6);

\draw[dashed, bend right=20] (mid16) to ($(1)!0.5!(3)$);
\draw[dashed, bend left=20] (mid26) to ($(2)!0.5!(5)$);
\draw[dashed, bend left=20] ($(1)!0.6!(4)$) to ($(4)!0.4!(2)$);
\draw[dashed, bend left=20] ($(3)!0.5!(4)$) to (mid35);
\draw[dashed, bend right=20] ($(4)!0.55!(5)$) to ($(5)!0.3!(6)$);
\draw[dashed, bend left=20] ($(3)!0.5!(6)$) to ($(4)!0.55!(6)$);

\end{tikzpicture}
\caption*{(a)}
\end{minipage}
\hfill
\begin{minipage}{0.25\textwidth}
\centering
\begin{tikzpicture}[scale=0.8, every node/.style={font=\small}, thick]

\tikzstyle{vertex} = [circle, fill=black, inner sep=1.5pt]

\node[vertex] (u1) at (0,3) {};
\node[left=2pt of u1] { $u_1$};

\node[vertex] (1) at (-0.6,2) {};
\node[left=0.25pt of 1] {$v_1$};

\node[vertex] (2) at (0.6,2) {};
\node[right=0.25pt of 2] {$v_2$};

\node[vertex] (4) at (0,0.8) {};
\node[right=2pt of 4] { $v_4$};

\node[vertex] (3) at (-0.8,0.4) {};
\node[below=2pt of 3] { $v_3$};

\node[vertex] (5) at (0.8,0.4) {};
\node[below=2pt of 5] { $v_5$};

\node[vertex] (6) at (0,-1) {};
\node[below=2pt of 6] { $v_6$};

\draw (1) -- (4);
\draw (1) -- (3);
\draw (1) to[bend right=65] node[pos=0.25, coordinate] (mid16) {} (6);
\draw (2) -- (4);
\draw (2) to[bend left=65] node[pos=0.25, coordinate] (mid26) {} (6);
\draw (2) -- (5);
\draw (3) -- (4);
\draw (3) to[bend right=25] node[pos=0.25, coordinate] (mid35) {} (5);
\draw (3) -- (6);
\draw (4) -- (5);
\draw (4) -- (6);
\draw (5) -- (6);

\draw (u1) -- (1);
\draw (u1) -- (2);
\draw[dashed, bend right=20] ($(u1)!0.4!(1)$) to ($(u1)!0.4!(2)$);

\draw[dashed, bend right=20] (mid16) to ($(1)!0.5!(3)$);
\draw[dashed, bend left=20] (mid26) to ($(2)!0.5!(5)$);
\draw[dashed, bend left=20] ($(1)!0.6!(4)$) to ($(4)!0.4!(2)$);
\draw[dashed, bend left=20] ($(3)!0.5!(4)$) to (mid35);
\draw[dashed, bend right=20] ($(4)!0.55!(5)$) to ($(5)!0.3!(6)$);
\draw[dashed, bend left=20] ($(3)!0.5!(6)$) to ($(4)!0.55!(6)$);

\end{tikzpicture}
\caption*{(b)}
\end{minipage}

\caption{\small (a) An uncuttable, $3$-degenerate graph $H$ with $1$-regular conflicts, (b) An uncuttable, $3$-degenerate graph $H'$ derived from $H$ after adding a vertex $u_1$ and a pair of conflicting edges $u_1 v_1$ and $u_1v_2$ to $H$. Dashed edges represent conflicts.}
\label{fig:Uncuttable_3-degenerate}
\vspace{-0.28in}
\end{figure}

\begin{lemma} 
\label{lem:small-3dg}
The graph $H$ with 1-regular conflicts as shown in Figure \ref{fig:Uncuttable_3-degenerate}(a), is an uncuttable, 3-degenerate graph. 
\end{lemma}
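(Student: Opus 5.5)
The plan has two parts: show $H$ is 3-degenerate by an explicit elimination order, then show it is uncuttable by a short case analysis. I read the graph off Figure~\ref{fig:Uncuttable_3-degenerate}(a). Its edges are $v_1v_3, v_1v_4, v_1v_6, v_2v_4, v_2v_5, v_2v_6$ together with all six edges among $\{v_3,v_4,v_5,v_6\}$, which therefore form a $K_4$. The conflicting pairs are
$\{v_1v_3, v_1v_6\}$, $\{v_2v_5, v_2v_6\}$, $\{v_1v_4, v_2v_4\}$, $\{v_3v_4, v_3v_5\}$, $\{v_4v_5, v_5v_6\}$ and $\{v_3v_6, v_4v_6\}$.
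So $v_4$ and $v_6$ have degree $5$, $v_3$ and $v_5$ have degree $4$, and $v_1$ and $v_2$ have degree $3$.

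\emph{3-degeneracy.} Delete $v_1$, which has degree $3$. Then $v_2$ still has degree $3$, so delete it. What remains is the $K_4$ on $\{v_3,\dots,v_6\}$, which is 3-regular and can be deleted one vertex at a time. This gives the required ordering.

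\emph{Uncuttability: the basic tool.} Suppose $M=E(A,B)$ is a CF-cut. Every conflicting pair in $H$ consists of two edges $vx, vy$ sharing an endpoint $v$. Hence $x$ and $y$ cannot both lie on the side opposite to $v$. I will use this local rule repeatedly.

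\emph{Step 1: the $K_4$.} I first show that $v_3,v_4,v_5,v_6$ lie on a common side, by checking every nontrivial bipartition of $K_4$.
\begin{itemize}
\item Singleton $\{v_3\}$: its conflicting neighbours $v_4,v_5$ would both be opposite it, which is forbidden. The same argument rules out $\{v_5\}$ (conflicting neighbours $v_4,v_6$) and $\{v_6\}$ (conflicting neighbours $v_3,v_4$).
\item Split $\{v_3,v_4\}\,|\,\{v_5,v_6\}$: both $v_3v_6$ and $v_4v_6$ are cut, and they conflict.
\item Split $\{v_3,v_5\}\,|\,\{v_4,v_6\}$: both $v_4v_5$ and $v_5v_6$ are cut, and they conflict.
\item Split $\{v_3,v_6\}\,|\,\{v_4,v_5\}$: both $v_3v_4$ and $v_3v_5$ are cut, and they conflict.
\end{itemize}

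\emph{Step 2: the exceptional split.} The one bipartition the $K_4$ conflicts alone do not kill is $v_4$ isolated from $v_3,v_5,v_6$. Here I bring in $v_1$ and $v_2$. Since $v_1v_4$ and $v_2v_4$ conflict, at least one of $v_1, v_2$ is on $v_4$'s side.
\begin{itemize}
\item If $v_1$ is on $v_4$'s side, then $v_1$'s neighbours $v_3$ and $v_6$ are both opposite it. This is forbidden, since $v_1v_3$ and $v_1v_6$ conflict.
\item If $v_2$ is on $v_4$'s side, then $v_5$ and $v_6$ are both opposite it. This is forbidden, since $v_2v_5$ and $v_2v_6$ conflict.
\end{itemize}

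\emph{Step 3: placing $v_1$ and $v_2$.} Now $v_3,\dots,v_6$ all lie in $A$, so $B\subseteq\{v_1,v_2\}$ is nonempty.
\begin{itemize}
\item If $B=\{v_1\}$, then $v_1v_3$ and $v_1v_6$ are both cut.
\item If $B=\{v_2\}$, then $v_2v_5$ and $v_2v_6$ are both cut.
\item If $B=\{v_1,v_2\}$, then $v_1v_4$ and $v_2v_4$ are both cut.
\end{itemize}
Each case puts a conflicting pair into $M$, a contradiction, so $H$ is uncuttable.

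The main obstacle is Step 2, the isolated-$v_4$ split. It is the only place where the $K_4$ conflicts do not suffice, and it needs the conflicts at $v_1$ and $v_2$. Everything else is a routine check of the conflict list.
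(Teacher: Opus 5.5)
Your proof is correct: your reading of the edges and conflicts matches Figure~\ref{fig:Uncuttable_3-degenerate}(a), and each case checks out. Your route differs from the paper's in how the case analysis is organized.

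The paper first shows $v_1v_6\notin M$ and $v_2v_6\notin M$. Each is proved by assuming the edge is cut and following a forced chain of placements until the conflicting pair $v_3v_4, v_3v_5$ is cut. For $v_1v_6$, the chain is: $v_3$ joins $v_1$, then $v_4$ and $v_5$ join $v_6$. With $v_1,v_2,v_6$ then on one side, the conflicts force $v_4$, $v_5$, $v_3$ onto that side in turn.

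That forcing-chain style is shorter and mirrors the propagation arguments used elsewhere in the paper (e.g.\ Lemma~\ref{lem:singcut}). Your enumeration of the seven nontrivial bipartitions of the $K_4$ on $\{v_3,\dots,v_6\}$ is longer but more transparent. It shows that the three conflicts inside the $K_4$ defeat every split except isolating $v_4$. It also shows that $v_1,v_2$, together with the conflict between $v_1v_4$ and $v_2v_4$, are there precisely to block that one split.

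A small point in your favour concerns 3-degeneracy. The paper calls $v_1,\dots,v_6$ a degeneracy ordering. That is valid as a deletion sequence, which is how you use it. It is not valid under the paper's own definition via preceding neighbours, since $v_6$ has five earlier neighbours.
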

\begin{proof} We can easily verify that the graph $H$ is 3-degenerate. The sequence $v_1, v_2, v_3, v_4, v_5, v_6$ is a degeneracy ordering. To show that the graph with the shown conflicts is uncuttable, for the sake of contradiction, let us assume that $M = E(A, B)$ is a CF-cut of \(H\). 

We first note that the edge $v_1v_6 \notin M$. Suppose not, say $v_1v_6$ is part of the cut. Without loss of generality, we may  assume that $v_1 \in A$ and $v_6 \in B$. This implies that $v_3 \in A$ and since $v_6 \in B$ and $v_3 \in A$, we can further infer that $v_4 \in B$. 
This implies that $v_5 \in B$.  This makes two conflicting edges $v_3 v_4$ and $v_3 v_5 \in M$, a contradiction.

Next, we note that the edge $v_2v_6 \notin M$. Suppose not, say $v_2v_6$ is part of the cut. Without 
loss of generality, we may assume that $v_2 \in A$ and $v_6 \in B$. This implies that $v_5 \in A$. Since $v_6 \in B$ and $v_5 \in A$ we can infer $v_4 \in A$. Similarly, $v_6 \in B$ and $v_4 \in A$ together imply $v_3 \in B$. This makes two conflicting edges $v_3 v_4$ and $v_3 v_5 \in M$, a contradiction. 

As explained above, we know that the edges $v_1v_6, v_2v_6$ are not in $M$. Without loss of generality, we assume that $v_1, v_2, v_6 \in A$. This successively implies that $v_4, v_5, v_3 \in A$. Hence all the six vertices are on the same side of the cut. This completes the proof.
\qed
\end{proof}


We build on this graph $H$ and repeatedly apply the construction in Lemma \ref{lem:extendH} to obtain uncuttable graphs of arbitrary size. This is summarized in the below lemma.

\begin{lemma}
\label{lem:uncuttableH''}
There is a family of uncuttable graphs $H_i$ with 1-regular conflicts that satisfy the following properties: 
\begin{itemize}
    \item $|V(H_i)| = 13 + 7i$, where 8 vertices are of degree 5 in $H_i$, $1+ 7i$ vertices are of degree 4, and 4 vertices are of degree 2. 
    \item $H_i$ has maximum degree 5 and $|E(H_i)| = 2 |V(H_i)|$.
    \item Repeatedly deleting degree 2 vertices from $H_{i+1}$ yields $H_i$ for $i \geq 1$, and repeatedly deleting degree 
    2 vertices from $H_1$ yields $H$ (as referred to in Lemma \ref{lem:small-3dg}).
\end{itemize}
\end{lemma}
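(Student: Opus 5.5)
The plan is to build the family explicitly by repeatedly applying the construction of Lemma~\ref{lem:extendH} to $H$. Uncuttability then follows by induction, and the remaining work is degree bookkeeping. First I record the degrees in $H$ from Figure~\ref{fig:Uncuttable_3-degenerate}(a): $d(v_1)=d(v_2)=3$, $d(v_3)=d(v_5)=4$ and $d(v_4)=d(v_6)=5$. So $|V(H)|=6$ and $|E(H)|=12=2|V(H)|$. Each application of Lemma~\ref{lem:extendH} adds one vertex and two edges, so the identity $|E|=2|V|$ is preserved. By Lemma~\ref{lem:small-3dg} and Lemma~\ref{lem:extendH}, every graph obtained this way is uncuttable and $3$-degenerate. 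It therefore remains to choose the attachment points so that the stated vertex and degree counts hold and the maximum degree stays $5$.

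\emph{Construction of $H_1$.} I add $14$ new vertices $u_1,\ldots,u_{14}$ to $H$ one at a time, giving $|V(H_1)|=20$. Each $u_k$ is joined by a pair of mutually conflicting edges to two vertices already present. There are $28$ new edge-endpoints in total; $14$ of them lie on the $u_k$'s themselves. The other $14$ endpoints land on earlier vertices, and I distribute them as follows. The first new vertex is $u_1$, attached to $v_1,v_2$ as in Figure~\ref{fig:Uncuttable_3-degenerate}(b). The remaining endpoints are placed so that $v_1,v_2$ reach degree $5$, $v_3,v_5$ reach degree $5$, and $v_4,v_6$ receive nothing. Among the $u_k$'s, four reach degree $5$, nine reach degree $4$, and four stay at degree $2$. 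A degree-sum check confirms this is feasible: the target sum is $8\cdot5+8\cdot4+4\cdot2=80=4\cdot20$. I would fix one explicit attachment list, or a figure, realizing this distribution, and verify the degrees directly.

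\emph{Inductive step $H_i\to H_{i+1}$.} I add $7$ new vertices $w_1,\ldots,w_7$ one at a time, again each with two conflicting edges to existing vertices. Let $p_1,\ldots,p_4$ be the four degree-$2$ vertices of $H_i$. The attachments are chosen so that each $p_j$ receives exactly two new edges, which raises it to degree $4$ and uses $8$ of the $14$ lower endpoints. The remaining $6$ lower endpoints go two each to $w_1,w_2,w_3$. For example, $w_1,w_2$ attach to $\{p_1,p_2\}$ and $\{p_3,p_4\}$, and $w_3$ attaches to $\{p_1,p_3\}$. Then $w_4$ attaches to $\{p_2,p_4\}$, and the pairs $w_5,w_6,w_7$ attach among $\{w_1,w_2,w_3\}$, giving for instance $\{w_1,w_2\},\{w_2,w_3\},\{w_3,w_1\}$. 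After this step $w_1,w_2,w_3$ have degree $4$, the four vertices $w_4,\ldots,w_7$ have degree $2$, and no degree-$5$ vertex is touched. Hence the counts $8$, $1+7(i+1)$ and $4$ for degrees $5$, $4$ and $2$ hold, and the maximum degree remains $5$. The third bullet of the statement holds because deleting the added vertices in reverse order always removes a vertex of degree $2$, with the same argument applied to $H_1\to H$.

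The main obstacle is the explicit base step $H\to H_1$. There one must actually exhibit an attachment order in which every new vertex, at the moment it is added, has two distinct existing neighbours, and in which the final degrees come out exactly as $8,8,4$ without exceeding $5$. My first attempt is a greedy scheme. I would first saturate $v_1,v_2,v_3,v_5$ using new vertices, then join later new vertices to earlier new ones, forming a short path or cycle-like chain among the $u_k$. The chain is arranged so that four of its vertices collect three extra edges (degree $5$) and its last four vertices remain at degree $2$. I would then check the tally by hand.
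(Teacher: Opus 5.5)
Your overall strategy is the paper's: iterate Lemma~\ref{lem:extendH} starting from $H$, so uncuttability, $3$-degeneracy and $|E|=2|V|$ come for free and only degree bookkeeping remains. Your inductive step $H_i\to H_{i+1}$ is correct. The $p_j$ and $w_1,w_2,w_3$ end at degree $4$, $w_4,\dots,w_7$ stay at degree $2$, no degree-$5$ vertex is touched, and no parallel edges arise. It is a different $7$-vertex gadget from the paper's ladder, which puts $u_{15},u_{16},u_{17}$ on consecutive pairs of $u_{11},\dots,u_{14}$ and then $u_{18},\dots,u_{21}$ on consecutive pairs of the path $u_{11}u_{15}u_{16}u_{17}u_{14}$, but it does the same job.

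The gap is the base case $H\to H_1$. This is where the constants $8$ and $4$ in the first bullet come from, and your inductive step presupposes them; yet you never exhibit $H_1$, and the bookkeeping you give for it is wrong. Fourteen new vertices contribute $28$ new edges, so $28$ endpoints (not $14$) land on earlier vertices. Six of these land on the $v_j$, raising $v_1,v_2$ by two and $v_3,v_5$ by one, and $22$ land on earlier $u_k$. Since all six $v_j$ then have degree $5$, exactly two $u_k$ may reach degree $5$, eight must end at degree $4$, and four at degree $2$. Your ``four, nine, four'' sums to $17\neq 14$ and would give ten vertices of degree $5$. So a greedy scheme ``checked by hand'' against these targets does not prove the lemma.

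The repair is an explicit list, with each pair conflicting: $u_1\to\{v_1,v_2\}$, $u_2\to\{v_1,v_3\}$, $u_3\to\{v_2,v_5\}$, $u_4\to\{u_1,u_2\}$, $u_5\to\{u_1,u_3\}$, $u_6\to\{u_2,u_3\}$, $u_7\to\{u_1,u_4\}$, $u_8\to\{u_2,u_4\}$, $u_9\to\{u_5,u_6\}$, $u_{10}\to\{u_7,u_8\}$, $u_{11}\to\{u_5,u_7\}$, $u_{12}\to\{u_6,u_8\}$, $u_{13}\to\{u_9,u_{10}\}$, $u_{14}\to\{u_9,u_{10}\}$. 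This gives degree $5$ on $v_1,\dots,v_6,u_1,u_2$, degree $4$ on $u_3,\dots,u_{10}$, and degree $2$ on $u_{11},\dots,u_{14}$, as required. The paper's text likewise fixes only $u_1,u_2,u_3$, which already saturates every $v_j$. It defers $u_4,\dots,u_{14}$ to its figure, with the rule that new edges go only to vertices of current degree at most $4$.
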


\begin{proof}
    
Here, we discuss a constructive proof. 
Let $H$ be the simple, connected, 3-degenerate, uncuttable graph with 1-regular conflicts as shown in Figure \ref{fig:Uncuttable_3-degenerate}(a). We have $|E(H)| = 2 |V(H)|$. 
Notice that the construction in Lemma \ref{lem:extendH} results in a 3-degenerate, uncuttable graph maintaining the ratio between edges and vertices. 

Starting from $H$, we apply the construction in Lemma \ref{lem:extendH} repeatedly to obtain $H_i$, $i\geq1$. 
First we construct $H_1$ from $H$ as follows: we add three new vertices $u_1, u_2$ and $u_3$ and six new edges $u_1v_1, u_1v_2, u_2 v_1, u_2 v_3, u_3 v_2$ and $u_3v_5$ to \(H\) where the pairs 
$(u_1v_1, u_1v_2)$, $(u_2 v_1, u_2 v_3)$, $(u_3 v_2, u_3 v_5)$
conflict with each other.
At this point, $H_1$ under construction has two types of vertices: 
$v_j$ for $1 \leq j \leq 6$ of degree 5, and vertices $u_k$ for $1 \leq k \leq 3$ of degree 2. We continue to add new vertices $u_{\ell}$ for $\ell \geq 4$ and two conflicting edges $u_{\ell} u_r$ and $u_{\ell} u_s$, such that $u_r$ and $u_s$ have degrees at most 4 as shown in Figure \ref{fig:Extendable_3-degen}(a). The resulting graph containing the vertices $v_1,v_2,\ldots,v_6$ and $u_1,u_2,\ldots,u_{14}$  is $H_1$. 


We extend this construction to get $H_2$ by adding seven new vertices in two layers: we first add $u_{15}, u_{16}, u_{17}$ 
with  conflicting pairs of edges $(u_{15} u_{11}, u_{15} u_{12})$, $(u_{16}u_{12}, u_{16}u_{13})$, and $(u_{17}u_{13},u_{17}u_{14} )$, as shown in the Figure \ref{fig:Extendable_3-degen}(b).
We then add four new vertices $u_{18}, u_{19}, u_{20}$ and $u_{21}$ with the 
following conflicting pairs of edges: $(u_{18}u_{11}, u_{18}u_{15})$, $(u_{19}u_{15}, u_{19}u_{16})$, $(u_{20} u_{16}, u_{20}u_{17})$ and $(u_{21}u_{17}, u_{21}u_{14})$. 

We can repeat this process to obtain $H_i$, $i\geq 3$, of the desired size. Since the construction repeatedly uses the process described in Lemma \ref{lem:extendH}, it follows that the graphs at each stage of construction are uncuttable. 
Since we add degree two vertices at each stage, it follows that 
the graphs are 3-degenerate throughout and that $|E(H_i)| = 2|V(H_i)|$.
\qed
\end{proof}

\begin{figure}[htbp]
    \centering
    \begin{subfigure}[b]{0.4\textwidth}
        \centering
        \includegraphics[width=\textwidth]{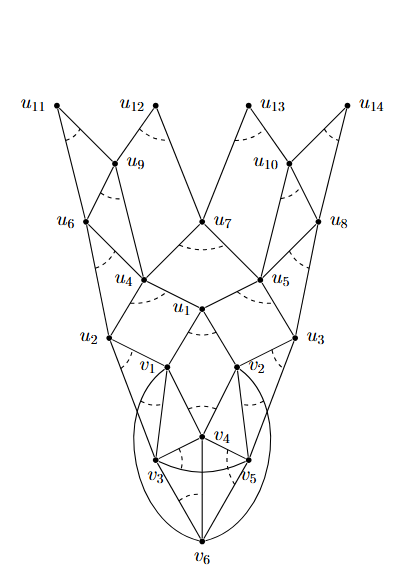}
        \caption{}
        \label{fig:Extendable_3-degen_a}
    \end{subfigure}
    \hfill
    \begin{subfigure}[b]{0.4\textwidth}
        \centering
        \includegraphics[width=\textwidth]{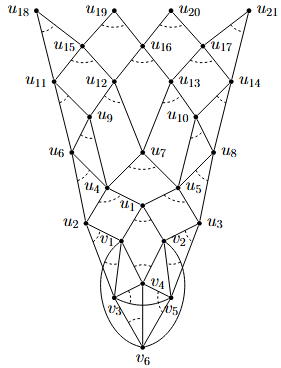}
        \caption{}
        \label{fig:Extendable_3-degen_b}
    \end{subfigure}
    \caption{\small $(a)$ An uncuttable, extendable, $3$-degenerate graph $H_i$ for $i=1$, of max degree $5$ with $1$-regular conflicts, $(b)$ An uncuttable, extendable, $3$-degenerate graph $H_i$ for $i=2$, of max degree $5$. The dotted lines represent the conflicts.}
    \label{fig:Extendable_3-degen}
\end{figure}

\noindent Now we are ready to prove Theorem \ref{thm:3-degenerate_graph}.

\begin{proof}[Proof of Theorem \ref{thm:3-degenerate_graph}] It is straightforward to see that the problem is in NP since we only need to verify that a given cut $M$ separates the graph, and that $M$ is conflict-free. We focus on showing  the NP-hardness. 
We start with the reduction from a clean 3-SAT formula $\mathcal{F}$ stated in Section \ref{subsec:G'NP}. By Lemma \ref{lem:G'NP}, it follows that the graph $G'$ obtained as a result of the reduction has a CF-cut if and only if the given clean 3-SAT instance is satisfiable. 
We need to obtain a simple 3-degenerate graph from $G'$ that has maximum degree 5. We replace the vertices of $G'$ each with a sufficiently large uncuttable $H_i$, as mentioned in Lemma \ref{lem:uncuttableH''}. 
The adjacencies of a vertex $v \in G'$ are distributed among the degree 4 and degree 2 vertices of $H_i$. 
Since the maximum degree of $G'$ is $4m$ (where $m$ is the number of clauses in $\mathcal F$), setting $i \geq 4m/7$ suffices.

The constructed graph is 3-degenerate. To observe this, 
note that by Lemma \ref{lem:uncuttableH''}, the graph $H_i$ contains 4 vertices of degree 2. So even with the additional adjacencies from $G'$, these vertices have degree 3. 
We can remove the degree 2 vertices in $H_i$ repeatedly and
arrive at a copy of $H$ disconnected from the rest of the graph. Since $H$ is 3-degenerate,
we can strip away the entire gadget. Once all the gadgets are removed, the resulting graph is empty. 
Thus, we get an uncuttable 3-degenerate graph $G$ of maximum degree 5 such that $G$ has a CF-cut if and only if the clean 3-SAT formula $\mathcal F$ is satisfiable.
\qed
\end{proof}

\section{An Uncuttable Class of Planar Graphs}
In this section, we present a class of planar graphs with a  1-regular conflict assignment that is uncuttable. Although we have already presented a class of uncuttable planar graphs with 1-regular conflicts in Lemma \ref{lem:planaruncuttable}, we present one more class of such graphs to answer the questions asked by \cite{rauch2025conflict} regarding the CF-cut on planar graphs. We first define prism graphs. 
\begin{definition}[Prism Graph]
    The \emph{prism graph} $P_{2t}$ on $2t$ vertices contains two $t$-cycles $v_1, v_2, \ldots, v_t$ and $v_{t+1}, v_{t+2}, \ldots, v_{2t}$, with additional edges $v_i v_{t+i}$ for all $1 \leq i \leq t$. 
\end{definition}

Prism graphs on 8 and 12 vertices are depicted in Fig. \ref{fig:Prism_Graphs}.

\begin{figure}[htbp]
    \centering
    \begin{subfigure}[b]{0.45\textwidth}
        \centering
        \includegraphics[width=\textwidth]{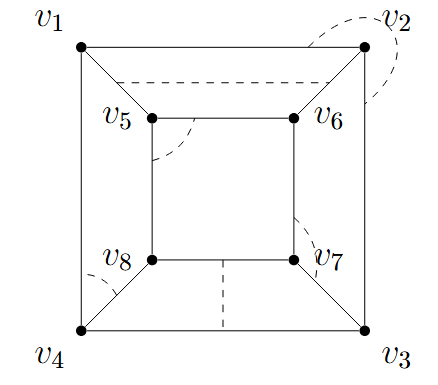}
        \caption{}
        \label{fig:Prism_on_Rectangle}
    \end{subfigure}
    \hfill
    \begin{subfigure}[b]{0.45\textwidth}
        \centering
        \includegraphics[width=\textwidth]{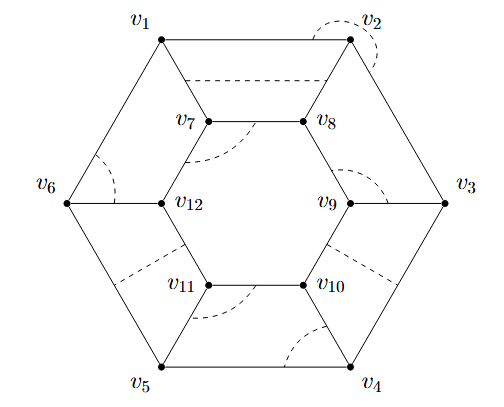}
        \caption{}
        \label{fig:Prism_on_Hexagon}
    \end{subfigure}
    \caption{(a) Prism Graph on $4t$ vertices for $t=2$ with 1-regular conflicts and (b) Prism Graph on $4t$ vertices for $t=3$ with 1-regular conflicts.}
    \label{fig:Prism_Graphs}
\end{figure}

The main result of this section is stated below. 

\begin{theorem}
\label{thm:easy_planar} 
Let $G = P_{4t}^*$ be the planar dual of the prism graph on $4t$ vertices, where $t \geq 2$. There exists a 1-regular conflict graph $\widehat{G}$ for which the graph $G$ does not have a conflict-free cut.
\end{theorem}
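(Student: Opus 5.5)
We will give an explicit conflict assignment and then show, by propagating side-assignments along the conflicts, that no CF-cut exists. First we fix the structure of $G$. The prism $P_{4t}$ has two $2t$-gonal faces and $2t$ quadrilateral faces, so its dual $G=P_{4t}^*$ is the bipyramid. It consists of a cycle $c_1c_2\ldots c_{2t}c_1$ (one vertex per quadrilateral face) and two apices $a,b$ (one per $2t$-gon), each adjacent to every $c_i$. Thus $|V(G)|=2t+2$ and $|E(G)|=6t\ge 2|V(G)|$. For $t=2$ this is the octahedron, and the assignment should specialise to the one in Figure~\ref{fig:octahedron}.

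We choose conflicts so that every vertex has an internal conflicting pair, which kills all trivial cuts. The candidate assignment, with indices taken modulo $2t$, is:
\begin{itemize}
\item $ac_i$ conflicts with $c_ic_{i+1}$ for odd $i$;
\item $bc_i$ conflicts with $c_ic_{i+1}$ for even $i$;
\item the remaining spokes $ac_{2k}$ are paired among themselves at $a$;
\item the remaining spokes $bc_{2k-1}$ are paired among themselves at $b$.
\end{itemize}
In the last two items we pair consecutive ones, adjusting by a cross pair $ac_{2k}\sim bc_{2k\pm1}$ if $t$ is odd. Every edge then lies in exactly one pair, so $\widehat G$ is 1-regular.

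Next we take an arbitrary CF-cut $M=E(A,B)$ and split into cases on the positions of $a$ and $b$.

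\emph{Case 1: $a,b\in A$.} Some $c_i$ lies in $B$. If $i$ is odd, then $ac_i\in M$, so $c_ic_{i+1}\notin M$ and $c_{i+1}\in B$. If $i$ is even, then $bc_i\in M$ forces $c_{i+1}\in B$ in the same way. Propagating around the cycle puts every $c_j$ in $B$. Then all spokes at $a$ are cut, including a conflicting pair among the $ac_{2k}$, which is a contradiction.

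\emph{Case 2: $a\in A$, $b\in B$.} The same propagation shows that an odd $c_i\in B$ forces $c_{i+1}\in B$, and an even $c_i\in A$ forces $c_{i+1}\in A$. So the cycle is split into runs whose boundaries occur only at odd-to-even transitions (from $A$ to $B$) and even-to-odd transitions (from $B$ to $A$). Note that the spokes of $a$ that are cut are exactly those to vertices in $B$, and the spokes of $b$ that are cut are exactly those to vertices in $A$. We then need to show that some pair among $\{ac_{2k}\}$ or $\{bc_{2k-1}\}$ has both its edges cut. The all-$A$ and all-$B$ configurations are immediate, since one apex then has all its spokes cut.

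The main obstacle is the mixed configurations, such as odd vertices in $A$ and even vertices in $B$. These survive the propagation rules, so the spoke pairing at $a$ and $b$ must be chosen so that every such alternating pattern cuts two paired spokes. If consecutive pairing at the apices fails here, we will first retune the pairing, for example pairing $ac_{2k}$ with $ac_{2k+t}$ or using cross pairs $ac_{2k}\sim bc_{2k+1}$. We would try to mirror the octahedron assignment of Figure~\ref{fig:octahedron}, where each triangle around the equator carries a single $T_1$ conflict, and extend it periodically around the cycle. Once the assignment is fixed, the verification is the finite case check above on the run structure of the cycle, using only the $s$--$t$ style propagation already employed in Lemma~\ref{lem:singcut}.
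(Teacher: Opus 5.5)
Your identification of $P_{4t}^*$ as the $2t$-gonal bipyramid is right, and your assignment is $1$-regular. For $t=2$ it is exactly the assignment of Figure~\ref{fig:octahedron}, with $a=v_2$, $b=v_6$, $(c_1,c_2,c_3,c_4)=(v_1,v_3,v_4,v_5)$, and your Case~1 is correct. There is a genuine gap, though, in Case~2, which is the only nontrivial case. You derive the propagation rules but never show that every configuration they allow cuts a conflicting pair. You also explicitly leave open whether the apex pairing must be ``retuned.'' So as written, neither the final assignment nor its uncuttability is established. Disposing of the all-$A$, all-$B$ and fully alternating patterns is not enough, because arbitrary run structures have to be excluded.

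Your candidate assignment does in fact work, and the missing step is short. With $a\in A$ and $b\in B$, your two rules amount to: (i) if $c_{2k}\in A$ then $c_{2k-1},c_{2k+1}\in A$. The apex pairs require (ii) $c_p,c_q$ not both in $B$ for each pair $ac_p\sim ac_q$, and (iii) $c_p,c_q$ not both in $A$ for each pair $bc_p\sim bc_q$.

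Fix the pairing $ac_{4j-2}\sim ac_{4j}$ and $bc_{4j-3}\sim bc_{4j-1}$ for $1\le j\le\lfloor t/2\rfloor$, together with $ac_{2t}\sim bc_{2t-1}$ when $t$ is odd. For each such $j$, the following chain holds:
\begin{itemize}
\item (iii) puts $c_{4j-3}$ or $c_{4j-1}$ in $B$, and both are neighbours of $c_{4j-2}$, so $c_{4j-2}\in B$ by (i);
\item then (ii) gives $c_{4j}\in A$;
\item then (i) gives $c_{4j-1},c_{4j+1}\in A$.
\end{itemize}
This yields a contradiction in every case:
\begin{itemize}
\item for $t\ge 4$ it puts $c_5,c_7\in A$, contradicting (iii) for $j=2$;
\item for $t=2$ it puts $c_3$ and $c_5=c_1$ in $A$, contradicting (iii) for $j=1$;
\item for $t=3$ it puts $c_3,c_5\in A$, so $c_1\in B$ by (iii), hence $c_6\in B$ by (i), and then both $ac_6$ and $bc_5$ are cut.
\end{itemize}
The remaining placements of $a$ and $b$ follow by swapping $A$ and $B$.

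Once completed, this is a valid proof by a route different from the paper's. The paper works in the primal prism and glues copies of the gadgets $H$ and $H'$. It then rules out conflict-free cycles by tracking conflict-free paths between entry and exit vertices, and transfers this to $G$ via Observation~\ref{obs:planardual}. Your argument avoids duality altogether and gives a uniform periodic assignment that visibly extends the octahedron.
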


We first give a proof overview. Note that every minimal cut in a planar graph is a cycle in its planar dual graph. 

\begin{observation}\label{obs:planardual}
    Every minimal cut in a planar graph $G$ is a cycle in its planar dual $G^*$, and vice versa. 
\end{observation}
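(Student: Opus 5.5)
We prove the correspondence $F \mapsto F^* = \{e^* : e \in F\}$ between edge sets of the connected plane graph $G$ and of its dual $G^*$. It is convenient to first record the standard fact that, for connected $G$, a set $M = E(A,B)$ with $G[A]$ and $G[B]$ connected (a minimal edge cut in the sense of the Preliminaries) is exactly an inclusion-minimal nonempty edge set whose removal disconnects $G$.
\begin{itemize}
\item If $G[A]$ and $G[B]$ are connected, then putting back any single edge of $M$ reconnects the graph, so no proper subset of $M$ disconnects $G$.
\item Conversely, an inclusion-minimal disconnecting set $M$ leaves exactly two components, since otherwise some edge of $M$ between two of them could be removed from $M$. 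So $M = E(A,B)$ with both sides connected.
\end{itemize}
With this, the observation reduces to two facts: (i) the dual of every cycle is a cut, and (ii) the dual of every cut contains a cycle. Minimality then finishes both directions.

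For (i), let $C$ be a cycle in $G^*$, allowing loops and parallel edges. Draw $C$ in the plane by joining, for each dual edge $e^*$ of $C$, the points representing the two incident faces through a single crossing with $e$, and otherwise staying inside the faces. This gives a simple closed curve $\gamma$ that meets the drawing of $G$ only in one interior point of each edge of $C^*$. By the Jordan curve theorem, $\gamma$ splits $V(G)$ into an inside set $A$ and an outside set $B$.
\begin{itemize}
\item Any edge of $G$ from $A$ to $B$ must meet $\gamma$, and hence lies in $C^*$.
\item Each edge of $C^*$ is crossed exactly once, so its endpoints lie on opposite sides. Hence $A$ and $B$ are both nonempty, and $C^* = E(A,B)$ is a cut.
\end{itemize}
This topological step is the one I expect to need the most care. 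I would handle it by choosing $\gamma$ explicitly inside the faces and invoking the Jordan curve theorem, rather than arguing purely combinatorially.

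For (ii), let $F = E(A,B)$ be any nonempty cut. Walk once around the boundary walk of a face $f$. Each boundary edge in $F$ switches the walk between $A$ and $B$, and the walk is closed, so $f$ meets $F$ an even number of times. A bridge appears twice on the walk and contributes a loop at $f^*$, which is consistent with this count. Thus every vertex of $G^*$ has even degree in the nonempty subgraph $F^*$, and so $F^*$ contains a cycle.

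Now combine the two facts.
\begin{itemize}
\item Let $F$ be a minimal cut. By (ii), $F^*$ contains a cycle $C$. By (i), $C^* \subseteq F$ is a cut, so minimality forces $C^* = F$, and therefore $F^*$ is a cycle.
\item Let $C$ be a cycle of $G^*$. By (i), $C^*$ is a cut, so it contains a minimal cut $F$. By the previous item, $F^*$ is a cycle contained in $C$. A cycle contains no proper subcycle, so $F^* = C$, and therefore $C^*$ is a minimal cut.
\end{itemize}
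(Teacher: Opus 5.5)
Your proof is correct. Note that the paper gives no proof of this observation at all. It asserts it as the standard cycle--bond duality for plane graphs, the fact found in Diestel's book, which the paper cites for notation. So there is no argument in the paper to match against. What you supply is a self-contained proof of a fact the paper takes for granted.

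\textbf{Comparison with the textbook proof.} Diestel proves the dual formulation: $E$ is a cycle of $G$ if and only if $E^*$ is a minimal cut of $G^*$. He identifies the components of $G^*-E^*$ with the regions of the plane minus the drawing of $E$, and then applies the Jordan curve theorem to a cycle of $G$.

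You split the statement into two inclusions and close both directions by minimality:
\begin{itemize}
\item the dual of a cycle of $G^*$ is a cut, via a Jordan curve;
\item the dual of any cut is a nonempty even subgraph of $G^*$, via parity along closed face boundary walks.
\end{itemize}
This confines the topology to a single step. The parity argument is purely combinatorial and treats bridges (dual loops) uniformly.

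Your preliminary equivalence is also worth having. It shows that the paper's ``minimal edge cut'' (a set $E(A,B)$ with both sides connected) coincides with inclusion-minimality. The paper's application implicitly relies on this: a CF-cut contains a minimal cut, and that minimal cut is still conflict-free.

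\textbf{Two small tightenings.}
\begin{itemize}
\item In fact (i), take $\gamma$ to be the drawing of $C$ in the plane dual $G^*$. Then simplicity of $\gamma$ is automatic. The step ``crossed exactly once, hence endpoints on opposite sides'' uses that $e^*$ crosses $e$ transversally, which the dual drawing guarantees.
\item In the second bullet of your reduction, add one line. No edge of an inclusion-minimal $M$ can have both ends in the same component of $G-M$, since it could then be dropped from $M$. This is what justifies $M=E(A,B)$.
\end{itemize}
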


We say that a cycle in a graph is \emph{conflict-free} (CF-cycle, in short) if it does not contain conflicting edges. 
By the above observation, 
finding a CF-cut in a planar graph $G$ is equivalent to finding a conflict-free cycle in its planar dual $G^*$. 
We show that conflicts can be assigned for prism graphs $P_{4t}$, for $t \geq 2$, in such a way that there are no conflict-free cycles. 
We use two ``building blocks'': a graph $H$ on 10 vertices and a graph $H'$ on 6 vertices, both with associated conflicts. Depending on the value of $t$, the graphs $P_{4t}$ are constructed using multiple copies of $H$ and possibly a copy of $H'$.

\begin{figure}[htbp]
    \centering
    \begin{subfigure}[b]{0.55\textwidth}
        \centering
        \includegraphics[width=\textwidth]{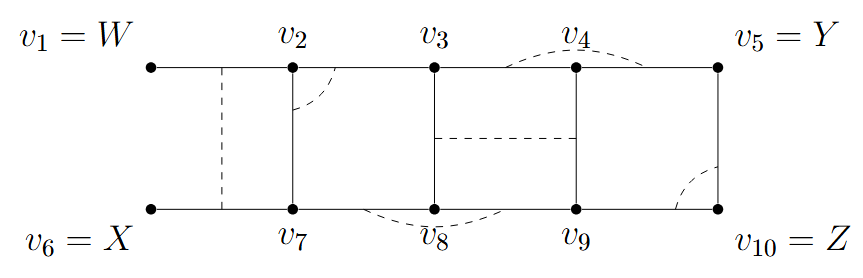}
        \caption{}
        \label{fig:Prism_Component-1}
    \end{subfigure}
    \hfill
    \begin{subfigure}[b]{0.4\textwidth}
        \centering
        \includegraphics[width=\textwidth]{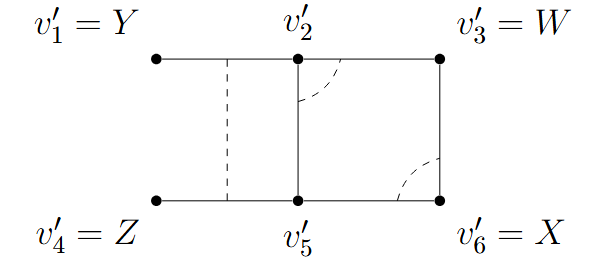}
        \caption{}
        \label{fig:Prism_Component-2}
    \end{subfigure}
    \caption{(a) Subgraph $H$ with 1-regular conflicts and (b) Subgraph $H'$ with 1-regular conflicts.}
    \label{fig:Prism_Components}
\end{figure}

\begin{lemma} 
\label{lem:cf-cycle}
Let $t\geq 2$. Using subgraphs $H$ and $H'$ with 1-regular conflicts, as shown in Figure \ref{fig:Prism_Components}, we can construct a prism graph $P_{4t} = (V, E)$ on $4t$ vertices with 1-regular conflicts without a CF-cycle.
\end{lemma}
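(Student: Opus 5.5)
We will build a conflict assignment on $P_{4t}$ with no CF-cycle. $P_{4t}$ consists of two $2t$-cycles (outer $v_1\ldots v_{2t}$, inner $v_{2t+1}\ldots v_{4t}$) joined by $2t$ rungs. It is $3$-regular with $6t$ edges, so a $1$-regular conflict graph is a perfect matching on these $6t$ edges.

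\textbf{Construction.} Write $4t = 10a + 6b$ with $a,b \ge 0$ and $b\in\{0,1\}$ when possible; otherwise use a small fixed remainder. Cut the prism into ``ladder segments'': $H$ covers $5$ consecutive rungs and $H'$ covers $3$ consecutive rungs. Concatenate $a$ copies of $H$ and $b$ copies of $H'$ cyclically, identifying the boundary rungs or cycle edges as dictated by Figure~\ref{fig:Prism_Components}. This should give exactly $2t$ rungs. The conflicts inside each block are those of the figure. Every conflict pair lies within one block, and blocks share no edges except along glued boundaries, so the result is $1$-regular.

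\textbf{Cycle analysis.} We classify every cycle $C$ of $P_{4t}$ by how it meets the blocks.

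\begin{enumerate}
\item[(i)] $C$ is the whole outer or the whole inner $2t$-cycle. The assignment inside each block must place a conflict between two outer cycle edges and between two inner cycle edges of the same block, which kills these two cycles.
\item[(ii)] $C$ uses rungs. Every cycle other than the two in (i) uses an even number of rungs, at least two. Going around the ladder, $C$ alternates between an outer arc and an inner arc, switching at rungs. Let $r$ be a rung used by $C$ in some block. We then verify locally within that block that every way $C$ can continue out of $r$ (along the outer side or the inner side, in either direction) eventually uses an edge conflicting with an edge already used. For this we show that each block is \emph{locally uncuttable}: any path through the block between two of its rungs, or from a rung to the block boundary, contains a conflicting pair, unless it exits the block entirely along a single cycle side.
\item[(iii)] The remaining possibility is that $C$ passes through whole blocks purely along one side. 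Such a $C$ must eventually turn at a rung in some block, and we fall back to case (ii) in that block. If $C$ never turns, it is a case (i) cycle.
\end{enumerate}

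\textbf{Main obstacle.} The key step is the local verification for $H$ (10 vertices) and $H'$ (6 vertices): that no partial cycle can enter through one boundary and use a rung of the block without hitting a conflict. This is a finite case check of paths in a ladder of 5 (respectively 3) rungs, with entry and exit states (outer or inner side) at the two boundaries. It can be organized as a transfer-matrix style argument, tracking which side(s) of each rung the cycle currently occupies. The arithmetic of decomposing $2t$ rungs into blocks of sizes $5$ and $3$ (which handles all $2t\ge 4$ except small cases, e.g.\ $2t=4,7$) also needs attention. The small cases $t=2,3$ are exactly the ones shown in Figure~\ref{fig:Prism_Graphs} and are verified directly.

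Finally, Theorem~\ref{thm:easy_planar} follows via Observation~\ref{obs:planardual}, with the conflicts transferred to the dual edges.
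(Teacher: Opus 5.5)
Your proposal has a genuine gap in the cycle analysis, and the construction bookkeeping is also wrong.

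\textbf{Construction.} In the paper, consecutive blocks share vertices but no edges. The vertices $v_5,v_{10}$ of one copy of $H$ are identified with $v_1,v_6$ of the next. $H$ contains the rung $v_5v_{10}$ but not $v_1v_6$: $v_5$--$v_{10}$ is listed as a conflict-free path, while no conflict-free path starts at $v_6$. Hence each copy of $H$ adds $8$ vertices and $4$ rungs, and $H'$ adds $2$ vertices and $2$ rungs. So $t/2$ copies of $H$ (for even $t$), or $\lfloor t/2\rfloor$ copies plus one $H'$ (for odd $t$), give exactly $P_{4t}$ for every $t\ge 2$, with no exceptional remainder. Your equation $4t=10a+6b$ double-counts the shared vertices and has no solution with $b\le 1$ for, e.g., $t=2,3,6$. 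Moreover, if blocks were glued along a shared rung as you suggest, that rung would get a conflict partner from each block, so the result would not be 1-regular.

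\textbf{Cycle analysis.} The local property you plan to verify in (ii) is false for these blocks, so no contradiction can be found inside the block containing a used rung. In $H$, the paths $v_1v_2v_3v_4v_9v_{10}$ and $v_5v_4v_9v_{10}$ are conflict-free and switch sides through the rung $v_4v_9$. The contradiction appears only when blocks are composed, which is exactly the step you defer as the ``main obstacle''. Your case (iii) merely falls back on (ii), so as written nothing rules out a conflict-free cycle spanning several blocks.

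\textbf{How the paper closes it.} It records which pairs of boundary vertices of $H$ are joined by conflict-free paths: none from $v_6$, none $v_1$--$v_5$, none $v_1$--$v_6$, and only $v_1$--$v_{10}$ and $v_5$--$v_{10}$ exist. It checks the cycles inside $H$ and $H'$ directly. Then it argues globally. A cycle meeting several copies cannot turn back in a copy entered from the $v_1,v_6$ side, so it must cross. The only crossing, $v_1\to v_{10}$, lands at $v_6$ of the next copy of $H$, either directly or through $H'$ (where from $v_4'$ one can only reach $v_6'$). That vertex is a dead end. Your ``transfer-matrix'' remark points in this direction, but you need these exact boundary-path facts and the composition step, not a per-block uncuttability claim.
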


\begin{proof} We refer to the vertices $v_1, v_5, v_6$, and $v_{10}$ of Figure \ref{fig:Prism_Component-1} as entry or exit vertices and remaining vertices as internal vertices. There are only 6 simple cycles in $H$ and it can be easily verified that none of them are CF-cycles. Moreover, there is no \emph{conflict-free path}, i.e. a path without two edges conflicting, starting from $v_6$ and ending at $v_1, v_5$, or $v_{10}$. 
Similarly, there is no conflict-free path that starts at $v_5$ and ends at $v_1$. There are exactly two conflict-free paths starting at $v_5$ and ending at $v_{10}$, namely $v_5 \text{--} v_4 \text{--} v_9 \text{--} v_{10}$ and $v_5 \text{--} v_{10}$. Similarly, there are two conflict-free paths from $v_1$ to $v_{10}$, namely $v_1 \text{--} v_2 \text{--} v_3 \text{--} v_4 \text{--} v_9 \text{--} v_{10}$ and $v_1 \text{--} v_2 \text{--} v_3 \text{--} v_8 \text{--} v_9 \text{--} v_{10}$. 

For even numbers $t$, we construct a $P_{4t}$ by attaching $t/2$ copies of $H$ in a sequential manner. The vertices $v_5$ and $v_{10}$ of a copy of $H$ are identified respectively with the vertices $v_1$ and $v_6$ of the next copy. Finally, the vertices $v_5$ and $v_{10}$ of the $(t/2)$-th copy of $H$ are identified with the vertices $v_1$ and $v_6$ of the first copy. So overall, we have $8 \cdot (t/2) = 4t$ vertices. 

We now argue that the constructed $P_{4t}$ has no conflict-free cycle. As noted above, there are no conflict-free cycles in a single copy of $H$. The only other possibility is that a conflict-free cycle spans multiple copies of $H$. This is not possible because there are no conflict-free paths from $v_1$ to $v_6$ in a copy of $H$. Hence, a conflict-free cycle cannot ``enter'' a copy of $H$ and ``loop back''. So the cycle has to ``enter'' through $v_1$ in a copy of $H$ and ``exit'' through $v_{10}$ of the same copy. However, this path starts at $v_6$ in the next copy of $H$ and hence cannot extend further.

Now, the only remaining case is when $t$ is odd. Let us label the vertices $v_1, v_5, v_6$ and $v_{10}$ in $H$ as $W, Y, X$ and $Z$ respectively. Similarly, we label the vertices $v_1', v_3', v_4'$ and $v_{6}'$ in $H'$ as $Y, W, Z$ and $X$ respectively. We now construct a $P_{4t}$ by attaching $\lfloor t/2 \rfloor$ copies of $H$ in a sequential manner as above, plus one copy of $H'$ such that the vertices marked $Y$ and $Z$ in the $\lfloor t/2 \rfloor$'th copy of $H$ are identified respectively with the vertices marked $Y$ and $Z$ in the copy of $H'$.
Similarly, the vertices marked $W$ and $X$ in the first copy of $H$ are identified, respectively, with the vertices marked $W$ and $X$ in the copy of $H'$.

It can be seen that a single copy of $H'$ has no conflict-free cycles (see in Figure \ref{fig:Prism_Component-2}).  
So any conflict-free cycle has to use at least one copy of $H$. We had noted earlier that any such cycle has to ``enter'' through $v_1$ in a copy of $H$ and ``exit'' through $v_{10}$ of the same copy. If the adjacent graph is a copy of $H$, then the same argument as in the case of even $t$ applies, and hence the cycle cannot extend further or loop back. 
If the adjacent graph is a copy of $H'$, then the path has to ``enter''  the copy of $H'$ through $v_4'$. It is easy to verify that there are no conflict-free paths in $H'$ that start at $v_4'$ and end at $v_1'$ or $v_3'$. Hence, the path has to ``exit'' the copy of $H'$ through $v_6'$. This path would have to ``enter'' a copy of $H$ through $v_6$, but such paths cannot exit $H$. 
This completes the proof for the case when $t$ is odd.
\qed
\end{proof}


\begin{proof}[Proof of Theorem \ref{thm:easy_planar}] 
We can use Lemma \ref{lem:cf-cycle} to construct a prism graph $P_{4t}$ on $4t$ vertices without conflict-free cycles. The planar dual of $P_{4t}$, will therefore have no conflict-free cuts. 

Since $P_{4t}$ is 3-regular, the planar dual $G$ will be a planar graph whose regions are all triangles. In other words, $G$ is a triangulation graph with 1-regular conflicts. Since a planar triangulation graph is also known as a maximal planar graph, we have a class of maximal planar graphs with 1-regular conflicts that is uncuttable. \qed
\end{proof}

\begin{corollary}\label{cor:octahedron} The planar dual of the prism graph on $8$ vertices, constructed using Lemma \ref{lem:cf-cycle}, is a 4-regular uncuttable planar graph with 1-regular conflicts. The planar dual in this case is an octahedron graph. 
\end{corollary}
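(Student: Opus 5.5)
The corollary has three parts: the dual is the octahedron, the dual is 4-regular, and the dual is uncuttable with 1-regular conflicts. I will treat them in that order.

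\emph{Identifying the dual.} The prism graph $P_8$ is the cube graph $Q_3$: two $4$-cycles $v_1v_2v_3v_4$ and $v_5v_6v_7v_8$ joined by the rungs $v_iv_{4+i}$. With $8$ vertices and $12$ edges, Euler's formula gives $6$ faces, and every face is a $4$-cycle. The dual $G=P_8^*$ therefore has $6$ vertices and $12$ edges. Each dual vertex has degree $4$, because each face of $P_8$ is bounded by four edges. Each dual face is a triangle, because $P_8$ is $3$-regular; this is the same reasoning as in the proof of Theorem~\ref{thm:easy_planar}. Two faces of the cube share an edge exactly when they are not opposite. So the dual is $K_6$ minus a perfect matching on the three opposite pairs, which is the octahedron graph.

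\emph{Transporting the conflicts.} Duality gives a bijection between $E(P_8)$ and $E(G)$. The 1-regular conflict graph that Lemma~\ref{lem:cf-cycle} places on $P_8$ (for $t=2$, built from a single copy of $H$ with $v_5,v_{10}$ identified with $v_1,v_6$) is carried across this bijection. The resulting conflict graph on $E(G)$ is still 1-regular.

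\emph{Uncuttability.} Suppose $F$ were a CF-cut of $G$. Then $F$ contains a minimal cut $F'\subseteq F$, and $F'$ is still conflict-free. By Observation~\ref{obs:planardual}, $F'$ corresponds to a cycle in $P_8$. That cycle would be conflict-free, contradicting Lemma~\ref{lem:cf-cycle}.

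\emph{Main obstacle.} The only step needing care is the $t=2$ case of Lemma~\ref{lem:cf-cycle}. The lemma glues $t/2$ copies of $H$ in a ring, and for $t=2$ this means a single copy is glued to itself. The ``enter at $v_1$, exit at $v_{10}$, then be stuck at $v_6$'' argument must still apply when $v_{10}$ and $v_6$ are the same copy's vertices after identification. I would check this directly: the identified vertex $v_{10}=v_6$ admits no continuation, since no conflict-free path starts at $v_6$ and reaches the other terminals. I would also check that the identification creates no new cycle that avoids all the listed path types, which is a finite check on the $12$ edges.

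As a consistency check, this construction gives an uncuttable 4-regular planar graph, and Theorem~\ref{thm:4-regular-planar} says the octahedron is the only possible exception. The conflicts obtained should match the configuration of Figure~\ref{fig:octahedron}.
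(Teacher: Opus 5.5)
Your proposal is correct and follows the paper's route. The paper takes the planar dual of the $t=2$ prism from Lemma~\ref{lem:cf-cycle} (Figure~\ref{fig:Prism_on_Rectangle}), uses that it has no CF-cycle together with the bond--cycle duality of Observation~\ref{obs:planardual}, and treats the identification of the dual as the octahedron as evident.

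Your caution about the single self-glued copy of $H$ when $t=2$ is well placed, and facts the paper already lists close it. No CF path joins $v_6$ to another terminal, and none joins $v_1$ to $v_5$. So a CF cycle of the glued graph would lift to a CF cycle of $H$, which does not exist.
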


\begin{proof} This claim is easily verifiable when we take the planar dual of the prism graph of the Figure \ref{fig:Prism_on_Rectangle}, which has no CF-cycle with 1-regular conflicts.\qed
\end{proof}
\paragraph{} Corollary \ref{cor:octahedron} also answers a question of \cite{rauch2025conflict} in which they asked about finding a 4-regular uncuttable graph with 1-regular conflicts that is not the square of an odd cycle.

\section{Future Work}
We conclude with some open directions: 
\begin{enumerate}
    \item The complexity of the CF-cut problem when $G$ is 4-regular (not necessarily planar) with 1-regular $\widehat{G}$.
    \item The complexity of the  CF-cut problem for maximal planar graphs $G$ with 1-regular $\widehat{G}$.
    \item The complexity of counting CF-cuts for a given graph $G$ with 1-regular $\widehat{G}$. It would also be an interesting direction for specific classes of $G$, such as when $G$ is 4-regular and planar. There are existing results on the enumeration of matching cuts \cite{golovach2021}.
    \item Taking a cue from the parameterized complexity results on matching cuts \cite{feghali2025journal,komusiewicz2020}, it would be interesting to study the complexity of CF-cuts with respect to different structural parameters of $G$. 
\end{enumerate}

\bibliographystyle{plain}
\bibliography{references}

\end{document}